\documentclass[11pt,letterpaper]{amsart}

\usepackage[dvipsnames]{xcolor}
\usepackage[a4paper,top=3cm,bottom=2.5cm,left=3cm,right=3cm,marginparwidth=1.75cm]{geometry}

\usepackage{amsmath, amssymb, amsfonts, latexsym, mdwlist, amsthm}
\usepackage{subfig}
\usepackage{graphicx}
\usepackage{tikz-cd}
\usepackage{wrapfig}
\usepackage{mathrsfs}
\tikzstyle{startstop} = [rectangle, rounded corners, minimum width=3cm, minimum height=1cm,text centered, draw=black, fill=red!30]
\tikzstyle{io} = [trapezium, trapezium left angle=70, trapezium right angle=110, minimum width=3cm, minimum height=1cm, text centered, draw=black, fill=blue!30]
\tikzstyle{process} = [rectangle, minimum width=3cm, minimum height=1cm, text centered, draw=black, fill=orange!30]
\tikzstyle{decision} = [diamond, minimum width=3cm, minimum height=1cm, text centered, draw=black, fill=green!30]
\tikzstyle{arrow} = [thick,->,>=stealth]

\usepackage{mathtools}
\usepackage{tikz}
\usepackage{comment}
\usepackage[new]{old-arrows}

\usetikzlibrary{patterns}

\usepackage{enumerate}
\usepackage[pagebackref]{hyperref}

\usepackage{amsmath, amssymb, amsfonts, latexsym, mdwlist, amsthm}
\usepackage{subfig}
\usepackage{wrapfig}

\usepackage{mathtools}
\usepackage{bm}

\definecolor{lightred}{HTML}{ff4d4d}
\definecolor{lightblue}{HTML}{1F88CD}
\definecolor{lightgrey}{HTML}{727272}
\definecolor{lightblue2}{HTML}{009EC1}
\definecolor{mypink}{HTML}{FD00B0}

\usepackage{tikz}
\usetikzlibrary{calc,trees,positioning,arrows,chains,shapes.geometric,%
    decorations.pathreplacing,decorations.pathmorphing,shapes,%
    matrix,shapes.symbols}

\tikzset{
>=stealth',
  punktchain/.style={
    rectangle,
    rounded corners,
    draw=black, thick,
    minimum height=3em,
    text centered,
    on chain},
  line/.style={draw, thick, <-},
  element/.style={
    tape,
    top color=white,
    bottom color=blue!50!black!60!,
    minimum width=8em,
    draw=blue!40!black!90, very thick,
    text width=10em,
    minimum height=3.5em,
    text centered,
    on chain},
  every join/.style={->, thick,shorten >=1pt},
  decoration={brace},
  tuborg/.style={decorate},
  tubnode/.style={midway, right=2pt},
}

\usepackage{paralist}
\setdefaultenum{(a)}{(i)}{}{}
\usepackage[shortlabels]{enumitem} 
\usepackage{derivative}
\usepackage{bbm}

\makeatletter
\newtheorem*{rep@theorem}{\rep@title}
\newcommand{\newreptheorem}[2]{%
\newenvironment{rep#1}[1]{%
 \def\rep@title{#2 \ref{##1}}%
 \begin{rep@theorem}}%
 {\end{rep@theorem}}}
\makeatother

\newtheorem{theorem}{Theorem}[section]
\newreptheorem{theorem}{Theorem}
\newtheorem{proposition}[theorem]{Proposition}

\newtheorem{lemma}[theorem]{Lemma}

\newtheorem{corollary}[theorem]{Corollary}
\newreptheorem{corollary}{Corollary}

\newreptheorem{conjecture}{Conjecture}

\newtheorem{thm-int}{Theorem}

\theoremstyle{definition}
\newtheorem{Def-s}[theorem]{Definition}
\newtheorem{definition}[theorem]{Definition}

\newcommand{\ignore}[1]{}

\usepackage[numbers]{natbib}
\newcommand{\lra}{\longrightarrow}
\newcommand{\wt}{\widetilde}

\newcommand{\ZZ}{\mathbb{Z}}

\newcommand{\QQ}{\mathbb{Q}}
\newcommand{\RR}{\mathbb{R}}

\newcommand{\CC}{\mathbb{C}}

\newcommand{\hvol}{\widehat{\operatorname{vol}}}

\newcommand{\Val}{\mathrm{Val}}

\DeclareMathOperator{\Aut}{Aut}

\DeclareMathOperator{\coeff}{coeff}

\DeclareMathOperator{\vol}{vol}

\DeclareMathOperator{\ord}{ord}

\DeclareMathOperator{\Spec}{Spec}

\newcommand{\cX}{\mathcal{X}}
\newcommand{\cY}{\mathcal{Y}}

\newcommand{\cO}{\mathcal{O}}

\newcommand{\TT}{\mathbb{T}}

\newcommand{\bA}{\mathbb{A}}

\newcommand{\Gm}{\mathbb{G}_m}

\def\citestacks#1{\cite[\href{https://stacks.math.columbia.edu/tag/#1}{#1}]{stacks-project}}

\usepackage{hyperref}
\hypersetup{
	colorlinks=true,
    linkcolor={blue},
    citecolor={blue},
	urlcolor={black}
}

\begin{document}

\title[The finite degree formula for normalized volumes]{The finite degree formula for normalized volumes}
\subjclass[2020]{14B05 (Primary); 14J17, 13A18, 14J45 (Secondary)}
\keywords{Normalized volume, K-stability, Singularity, Stable degeneration}

\author{Zhiyu Liu}

\address{Department of Mathematics, Princeton University, Princeton, NJ 08544, USA}
\email{zl3301@princeton.edu}
\urladdr{sites.google.com/view/zhiyuliu}

\begin{abstract}
Let $f\colon \big(x\in (X, \Delta_X)\big)\to \big(y\in (Y, \Delta_Y)\big)$ be a finite surjective morphism between klt singularities such that $K_X+\Delta_X=f^*(K_Y+\Delta_Y)$. We show that the normalized volumes satisfy
\[\widehat{\mathrm{vol}}(x, X, \Delta_X)=\mathrm{deg}(f)\cdot \widehat{\mathrm{vol}}(y, Y, \Delta_Y).\]
This proves a conjecture in \cite{li-liu-xu:guide,zhuang:survey,xu-zhuang:prob-list}.
\end{abstract}

\vspace{-1em}
\maketitle

\setcounter{tocdepth}{1}

\section{Introduction}

The normalized volume $\hvol(x,X,\Delta)$, introduced by Li in \cite{li:normalized-volume}, is a local
invariant of a klt singularity $x\in (X,\Delta)$ (see Definition~\ref{def:local-vol}). It plays a central role in the study of K-stability for Fano varieties \cite{xu:quasi-monomial,li-wang-xu:alg-cone,li-liu:ke-mini,li:equivariant-mini,li-xu:kollar-component,blum-xu:unique,blum-dhl-liu-xu:proper}, the local stability theory of singularities \cite{xu-zhuang:stable-deg,xu-zhuang:kaledin,xu-zhuang:boundedness}, and the moduli problems for explicit varieties \cite{liu-xu:cubic3,liu:cubic4}. We refer to \cite{li-liu-xu:guide,zhuang:survey,xu-zhuang:prob-list} for further background and details.

A powerful tool for studying normalized volumes is the \emph{finite degree formula}. More precisely, given a finite surjective morphism $f\colon \big(x\in (X, \Delta_X)\big)\to \big(y\in (Y, \Delta_Y)\big)$ between klt singularities satisfying $K_X+\Delta_X=f^*(K_Y+\Delta_Y)$, one may expect an equality 
\[\hvol(x, X, \Delta_X)=\mathrm{deg}(f)\cdot \hvol(y, Y, \Delta_Y).\]
When $f$ is a Galois cover, such a formula is proven in \cite{xu-zhuang:unique}. Since then, it has found many applications, including the moduli theory of varieties and boundedness problems~\cite{xu-zhuang:boundedness,han-qi-zhuang:boundedness,liu:optimal,blum-dhl-liu-xu:proper,blum-liu:lower,liu-xu:cubic3,liu:cubic4,han-liu-qi:acc,liu-zhao:fano3,liu-jh:boundedness-stable-family}. Despite its importance, the formula beyond the Galois case, which is predicted by \cite{li-liu-xu:guide,zhuang:survey,xu-zhuang:prob-list}, remains open.

In this paper, we prove the finite degree formula in full generality, as conjectured in \cite[Conjecture 6.4]{li-liu-xu:guide}, \cite[Conjecture 6.17]{zhuang:survey}, and \cite{xu-zhuang:prob-list}.

\begin{theorem}\label{thm:main}
Let $$f\colon \big(x\in (X, \Delta_X)\big)\lra \big(y\in (Y, \Delta_Y)\big)$$ be a finite surjective morphism between klt singularities. If $K_X+\Delta_X=f^*(K_Y+\Delta_Y)$, then
\[\hvol(x, X, \Delta_X)=\mathrm{deg}(f)\cdot \hvol(y, Y, \Delta_Y).\]
\end{theorem}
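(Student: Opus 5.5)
We split the identity into two inequalities and prove each using the stable degeneration theory, i.e. the existence and uniqueness of minimizers and the finite generation of their associated graded rings.

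\textbf{The inequality $\hvol(x,X,\Delta_X)\le \deg(f)\cdot\hvol(y,Y,\Delta_Y)$.} Let $v$ be the minimizer of $\hvol$ at $y\in(Y,\Delta_Y)$, which exists and is quasi-monomial. We pick an extension $w$ of $v$ to $K(X)$ with center $x$. Because $K_X+\Delta_X=f^*(K_Y+\Delta_Y)$, the log discrepancies satisfy $A_{X,\Delta_X}(w)=e\,A_{Y,\Delta_Y}(v)$, where $e$ is the ramification index of $w$ over $v$. A local version of the projection formula for valuation ideals should give
\[
\sum_{w\mid v,\ c_X(w)=x} e_w f_w\cdot \vol(w/e_w)\;\le\;\deg(f)\,\vol(v).
\]
This comes from comparing $\ell(\cO_X/\mathfrak a_m(w))$ with $\deg(f)\,\ell(\cO_Y/\mathfrak a_m(v))$, since $\cO_{X,x}$ is a summand of the finite module $f_*\cO_X$ localized at $y$. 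Rescaling then gives $\hvol(w)\le\deg(f)\,\hvol(v)$. This direction is standard, and it is essentially the inequality already used in the Galois case.

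\textbf{The reverse inequality.} This is the main obstacle. The Galois proof used uniqueness of the minimizer: the minimizer on $X$ is Galois-invariant, hence descends to $Y$. Without a Galois group, we pass to the Galois closure. Let $g\colon Z\to Y$ be the Galois closure of $f$, with a point $z$ over $x$, and let $h\colon Z\to X$ be the induced map. Define $\Delta_Z$ by crepant pullback, $K_Z+\Delta_Z=g^*(K_Y+\Delta_Y)$. Then $\Delta_Z$ is effective (an effective cover) and $(Z,\Delta_Z)$ is klt. The Galois formula of Xu--Zhuang applied to $g$, together with the decomposition group $G_z\subset G=\mathrm{Gal}(Z/Y)$, gives
\[
\hvol(z,Z,\Delta_Z)=|G_z|\cdot\hvol(y,Y,\Delta_Y).
\]
Similarly $h$ is Galois with group $H=\mathrm{Gal}(Z/X)$, so $\hvol(z,Z,\Delta_Z)=|H_z|\cdot\hvol(x,X,\Delta_X)$.

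The theorem is thus reduced to the group-theoretic/local identity $|G_z|=\deg(f)\,|H_z|$. This can fail when $f$ is not local, that is, when several points lie over $y$. We may shrink to a neighborhood where $x$ is the only point over $y$; since the statement is \'etale-local on $Y$, we henselize first. Then $G_z$ acts transitively on the fibre over $y$ of $Z\to$ (points of $Z/H$). Using $H_z=H\cap G_z$, we get $[G_z:H_z]=|G/H|=\deg f$. Indeed, orbit–stabilizer applied to the action of $G_z$ on $G/H$ gives $[G_z:G_z\cap H]$ equal to the size of the orbit, which is the set of cosets $gH$ with $g(z)$ over $x$, i.e. all of $G/H$ once $x$ is the unique preimage over the henselization.

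The delicate points are the following. First, we must justify that $\hvol$ is unchanged under henselization or \'etale base change; this follows from the description via ideals of finite colength, or via minimizers and stable degenerations. Second, we must confirm that the Galois formula holds for non-free actions and klt pairs with boundary, in the generality proved in \cite{xu-zhuang:unique}. Once these hold, the two Galois formulas combine to give the equality, and the first paragraph becomes a consistency check.
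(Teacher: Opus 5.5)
Your first direction, $\hvol(x,X,\Delta_X)\le\deg(f)\cdot\hvol(y,Y,\Delta_Y)$, is fine and is essentially the paper's argument.

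The gap is in the hard direction, at the claim that $K_Z+\Delta_Z=g^*(K_Y+\Delta_Y)$ gives an effective $\Delta_Z$ on the Galois closure. This is false in general.

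For a prime $E\subset Z$ over $D\subset Y$ one has $\coeff_E(\Delta_Z)=e_{E/D}\coeff_D(\Delta_Y)-(e_{E/D}-1)$. Inertia groups are cyclic in characteristic $0$, so $e_{E/D}$ is the lcm of the ramification indices $e_i$ of the primes of $X$ over $D$. Effectivity of $\Delta_X$ only gives $\coeff_D(\Delta_Y)\ge 1-1/e_i$ for each $i$. It does not give $\coeff_D(\Delta_Y)\ge 1-1/\mathrm{lcm}(e_i)$.

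Here is a concrete counterexample.
\begin{itemize}
\item Let $Y$ be the germ of $\bA^2$ at $0$ and $\Delta_Y=\tfrac23L_1+\tfrac12L_2+\tfrac45L_3$ for three lines through $0$. This is klt, since $\tfrac23+\tfrac12+\tfrac45<2$.
\item Let $f$ be the connected degree-$5$ cover branched along the $L_i$ whose meridians act by $(12)(345)$, $(23)$ and a $5$-cycle. Then $f^{-1}(0)$ is a point and $\Delta_X\ge 0$; over $L_1$ there are two primes, with $e=2,3$ and coefficients $\tfrac13,0$.
\item The Galois closure has ramification index $6$ over $L_1$, so $\Delta_Z$ has coefficient $6\cdot\tfrac23-5=-1$ there.
\end{itemize}

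Hence $(Z,\Delta_Z)$ is only sub-klt, and the Xu--Zhuang Galois formula is not available for $g$ or for $h$. Its proof uses uniqueness of the minimizer to force invariance under the Galois group, and for sub-klt germs minimizers need be neither unique nor invariant. This is exactly the obstruction the paper cites against the Galois-closure strategy. Your reduction proves the theorem only when $\Delta_Z$ happens to be effective (e.g.\ $f$ quasi-\'etale).

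What is missing is a different mechanism for $\hvol(x,X,\Delta_X)\ge\deg(f)\cdot\hvol(y,Y,\Delta_Y)$. The paper's route is as follows.
\begin{enumerate}
\item Degenerate $y\in(Y,\Delta_Y)$ to its K-semistable log Fano cone (Theorem~\ref{thm:stab-deg}).
\item Lift this special degeneration to $X$ after a base change $t\mapsto t^n$, obtaining a finite crepant map of central fibers of degree $\deg(f)$ (Proposition~\ref{prop:pull-back-weak}).
\item Lift the torus action through an isogeny so that the Reeb cones correspond (Proposition~\ref{prop:lift-cone}).
\item Show that the cone over $X$ is K-semistable for the corresponding Reeb vector. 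This uses approximation by quasi-regular Reeb vectors and the Liu--Zhu comparison of $\delta$-invariants under finite crepant morphisms of log Fano pairs, which is an analytic input (Theorem~\ref{thm:Kss}).
\item Compute $\hvol$ on the cones directly (Lemma~\ref{lem:vol-cone}).
\item Conclude by lower semicontinuity of $\hvol$ in the degeneration of $X$, together with $\hvol(y,Y,\Delta_Y)=\hvol(y_0,Y_0,\Delta_{Y_0})$.
\end{enumerate}
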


Note that its proof, which will be explained later, relies on an analytic input \cite[Theorem 1.2]{liu-zhu:equivariant}. Unlike the Galois case in \cite{zhuang:optimal,xu-zhuang:unique}, a purely algebraic proof is still unknown, which is sought by \cite{li-liu-xu:guide,zhuang:survey,xu-zhuang:prob-list}.

The above theorem is stated for (germs of) klt singularities. In the global setting, we have the following corollary.

\begin{corollary}\label{cor:global}
Let $f\colon (X, \Delta_X)\to (Y, \Delta_Y)$ be a finite surjective morphism between klt pairs. If $K_X+\Delta_X=f^*(K_Y+\Delta_Y)$, then for any closed point $y\in Y$, we have
\[\sum_{x_i\in f^{-1}(y)}\hvol(x_i, X, \Delta_X)=\mathrm{deg}(f)\cdot \hvol(y, Y, \Delta_Y).\]
\end{corollary}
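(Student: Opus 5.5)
We deduce Corollary~\ref{cor:global} from Theorem~\ref{thm:main} by passing to local germs. Fix a closed point $y\in Y$ and write $f^{-1}(y)=\{x_1,\dots,x_r\}$, a finite set since $f$ is finite. Since normalized volumes depend only on the germ, we work over $\cO_{Y,y}$, or better its henselization $\cO^h_{Y,y}$ (or completion; normalized volume is invariant under these). The base change of $f$ to $\Spec\cO^h_{Y,y}$ is finite over a henselian local ring. It therefore splits as a disjoint union $\bigsqcup_i \Spec \cO^h_{X,x_i}$, and the local pieces $f_i\colon (x_i\in X)\to (y\in Y)$ are finite surjective morphisms of germs.

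Next we record the degree relation. Since $X$ and $Y$ are normal, hence each local ring a domain after henselization (klt singularities are normal, and henselizations of normal local rings are normal domains), the generic rank of $f_*\cO_X$ over $\cO_Y$ is additive over the pieces. This gives $\deg(f)=\sum_i \deg(f_i)$, where $\deg(f_i)$ is the degree of the finite extension of fraction fields of the henselized local rings. The crepant condition $K_X+\Delta_X=f^*(K_Y+\Delta_Y)$ restricts to each germ, and each germ $x_i\in(X,\Delta_X)$ is klt. Applying Theorem~\ref{thm:main} to each $f_i$ gives $\hvol(x_i,X,\Delta_X)=\deg(f_i)\,\hvol(y,Y,\Delta_Y)$, and summing over $i$ yields the formula.

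The main point requiring care is that Theorem~\ref{thm:main} applies to germs that are only étale-locally (henselian) of the given form. The worry is that $\deg(f_i)$ in the henselian sense might not match a global degree. I would handle this in one of two ways.

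\begin{itemize}
\item Note that the theorem is stated for germs of klt singularities, so it may be applied directly to henselian germs. Alternatively, observe that its proof only uses valuations centered at $x_i$ and the local ring, so it is insensitive to henselization.
\item Choose an étale neighborhood $(y'\in Y')\to (y\in Y)$ over which $X\times_Y Y'$ splits into connected components $X'_i$, each containing exactly one point over $y'$. Then $X'_i\to Y'$ is finite surjective of degree $\deg(f_i)$, these degrees sum to $\deg(f)$, and normalized volumes are unchanged under étale base change.
\end{itemize}

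Everything else is formal.
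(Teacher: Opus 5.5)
Your proposal is correct and matches the paper's argument: the paper uses étale localization of quasi-finite morphisms \citestacks{02LN} to reduce to the case $f^{-1}(y)=\{x\}$, notes that the localized map $f_x$ is then finite \citestacks{05B8}, and applies Theorem~\ref{thm:main}; this reduction implicitly uses étale invariance of $\hvol$ and additivity of degrees, exactly as in your second bullet. One caveat: your first route, which applies the theorem directly to henselian germs, is not covered as stated, because the paper's singularities are spectra of local rings essentially of finite type over $\CC$, so the étale-neighborhood version is the one that actually closes the argument.
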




\subsection*{Sketch of the proof}

Under the additional assumption that $f$ is Galois, Theorem~\ref{thm:main} is proved by \cite{xu-zhuang:unique}. The idea is to use the uniqueness of the minimizer of the normalized volume function on $X$ to show that any minimizer is invariant under the action of the Galois group of $f$. Therefore, it descends to a suitable valuation on $Y$ and gives the desired equality.

For the general case, \cite{zhuang:survey} suggests passing to the Galois closure of $f$. However, this may produce non-effective boundary divisors, and we need to study normalized volumes of sub-klt singularities. Unfortunately, in the sub-klt case, the uniqueness of minimizers can fail, and minimizers need not be invariant.

In this paper, we instead use a completely different method, by applying the stable degeneration theory developed in \cite{xu-zhuang:stable-deg,xu:quasi-monomial,li-xu:higher-rank} to reduce the problem to log Fano cones. 

More precisely, \cite{xu-zhuang:stable-deg} gives a degeneration of $y\in(Y,\Delta_Y)$ to a K-semistable log Fano cone $y_0\in (Y_0, \Delta_{Y_0})$; see Theorem~\ref{thm:stab-deg}. In Proposition~\ref{prop:pull-back-weak}, we show that up to a finite base change, such a degeneration can be lifted to a degeneration of $x\in (X, \Delta_X)$ to $x_0\in (X_0, \Delta_{X_0})$, with a finite log-crepant surjective morphism 
$$g_0\colon \big(x_0\in (X_0,\Delta_{X_0})\big)\longrightarrow \big(y_0\in (Y_0,\Delta_{Y_0})\big).$$ 
Then, we prove that the log Fano cone structure can also be lifted from $Y_0$ to $X_0$ such that the corresponding tori are related by an isogeny and $g_0$ is equivariant (cf.~Proposition~\ref{prop:lift-cone}).

Next, we want to identify $\hvol(x_0,X_0,\Delta_{X_0})$ with $\deg(f)\cdot \hvol(y_0,Y_0,\Delta_{Y_0})$. Building on approximation techniques in \cite{xu-zhuang:kaledin}, we prove a local analog of \cite[Theorem~1.2(1)]{liu-zhu:equivariant} in Theorem~\ref{thm:Kss} by reducing to the quasi-regular case proved in \cite{liu-zhu:equivariant}. This implies that $x_0\in (X_0, \Delta_{X_0})$ is also K-semistable if we choose a Reeb vector that is compatible with the one on $Y_0$. Using this, a calculation in Lemma~\ref{lem:vol-cone} gives 
\[\hvol(x_0,X_0,\Delta_{X_0})=\deg(f)\cdot \hvol(y_0,Y_0,\Delta_{Y_0}).\]

Now, applying the lower semicontinuity of normalized volumes in families \cite{blum-liu:lower}, we get an inequality $\hvol(x,X,\Delta_{X})\geq \hvol(x_0,X_0,\Delta_{X_0})$, while $\hvol(y,Y,\Delta_{Y})= \hvol(y_0,Y_0,\Delta_{Y_0})$ follows from the property of stable degeneration. Combining these results, we conclude that
\[\hvol(x, X, \Delta_X)\geq \mathrm{deg}(f)\cdot \hvol(y, Y, \Delta_Y).\]
The reverse inequality follows from a straightforward computation.

\subsection*{Acknowledgments}

I thank Chen Jiang, Yuchen Liu, Lu Qi, Yongbin Ruan, and Chenyang Xu for helpful discussions and comments.

\section{Preliminaries}

Throughout this paper, all schemes are essentially of finite type over the field $\CC$ of complex numbers and all morphisms are over $\CC$. A variety is an integral separated scheme of finite type over $\CC$. We always denote by $t$ the coordinate of the affine line $\bA^1$ over $\CC$. The function field of an integral scheme $X$ and the fraction field of an integral domain $A$ are denoted by $\CC(X)$ and $\CC(A)$, respectively. If $X$ is affine, we denote by $\cO(X)$ its coordinate ring.

A pair $(X, \Delta)$ consists of a normal integral scheme $X$ essentially of finite type over $\CC$ and an effective $\QQ$-divisor $\Delta$ on $X$ such that $K_X+\Delta$ is $\QQ$-Cartier. We follow the standard terminology for singularities of pairs as in \cite[Definition 2.8]{Kollar13}. A log Fano pair is a klt pair $(X, \Delta)$ such that $X$ is a normal projective variety and $-(K_X+\Delta)$ is ample.

A singularity $x\in (X, \Delta)$ is a pair $(X, \Delta)$ such that $X$ is the spectrum of a local ring essentially of finite type over $\CC$ and $x$ is the unique closed point with $\kappa(x)=\CC$. A singularity $x\in (X, \Delta)$ is called klt if $(X, \Delta)$ is klt.

\subsection{Valuations and normalized volumes}

For an integral scheme $X$, we denote by $\Val_{X}$ the set of all (real) valuations on $\CC(X)$ centered on $X$. We write $$\Val_{X, x}\subset \Val_{X}$$ for the set of valuations centered at a point $x\in X$. If $G$ is a group that acts on $X$ and fixes $x$, then we denote by $\Val_{X, x}^G$ the set of $G$-invariant valuations. The following notion is standard.

\begin{definition}\label{def:volume}
For a singularity $x\in X=\Spec(R)$ and any $v\in \Val_{X, x}$, we define an ideal of $R$ for each $m\in \RR$ by
\[\mathfrak{a}_m(v)\coloneqq \{r\in R\mid v(r)\geq m\}.\]

Then the \emph{volume} of $v$ is defined by
\[\vol_{X, x}(v)\coloneqq \limsup_{m\to \infty} \frac{n!}{m^n}\mathrm{length}(R/\mathfrak{a}_m(v)),\]
where $n=\dim X$.
\end{definition}
By \cite{ELS,mustata:convex-body,cut}, the above limsup is actually a limit.

We define the graded ring associated to $v$ by
\[\mathrm{gr}_v(R)\coloneqq \bigoplus_{\lambda\in \Phi_v} \frac{\mathfrak{a}_\lambda(v)}{\mathfrak{a}_{>\lambda}(v)},\]
where $\Phi_v$ is the value semigroup of $v$ and
\[\mathfrak{a}_{>\lambda}(v)\coloneqq \{r\in R\mid v(r)>\lambda\}.\]
If $v=\mathrm{ord}_E$ for a divisor $E$ over $X$, we write $\mathrm{gr}_E(R)\coloneqq \mathrm{gr}_{\mathrm{ord}_E}(R)$.

By virtue of \cite{boucksom:val-mult,mustata:val-asymp}, for any pair $(X, \Delta)$, there is a \emph{log discrepancy function of valuations}
\[A_{X, \Delta}\colon \Val_X\longrightarrow \RR\cup\{+\infty\},\]
which generalizes the classical definition of log discrepancies along divisors. In particular, if $E$ is a prime divisor over $X$, then $A_{X,\Delta}(\ord_E)=A_{X,\Delta}(E)$.

The following invariant of a klt singularity, which was first defined in \cite{li:normalized-volume}, plays a key role in our paper.

\begin{definition}\label{def:local-vol}
For a klt singularity $x\in (X, \Delta)$ and any $v\in \Val_{X, x}$, we define the \emph{normalized volume} of $v$ by
\[\hvol_{(X, \Delta)}(v)\coloneqq (A_{X,\Delta}(v))^{\dim X}\cdot \vol_{X,x}(v)\]
if $A_{X,\Delta}(v)<+\infty$, and set it equal to $+\infty$ otherwise. 

The \emph{normalized volume of a klt singularity} $x\in (X, \Delta)$ is
\[\hvol(x, X, \Delta)\coloneqq \inf_{\substack{v\in \Val_{X,x}\\ A_{X,\Delta}(v)<+\infty}} \hvol_{(X, \Delta)}(v).\]
\end{definition}

By \cite{blum:minimizer}, the function $\hvol_{(X, \Delta)}(-)$ has a minimizer $v$ on $\Val_{X,x}$, i.e.
\[\hvol_{(X, \Delta)}(v)=\hvol(x, X, \Delta).\]
Moreover, such a minimizer $v$ is unique up to scaling \cite{xu-zhuang:unique,blum-liu-qi:convex}, and is quasi-monomial \cite{xu:quasi-monomial}.

\subsection{Special degeneration}\label{subsec:special-deg}

In our paper, we focus on the following important class of one-parameter degenerations of klt singularities.

\begin{definition}
A \emph{special degeneration} $$\pi_X\colon (\cX, \Delta_{\cX}; \sigma_{\cX})\to \bA^1$$ of a klt singularity $x\in (X, \Delta)$ consists of 

\begin{itemize}
    \item a normal integral affine scheme $\cX$ with a $\Gm$-action,

    \item a flat surjective $\Gm$-equivariant morphism $\pi_X\colon \cX\to \bA^1$ with each fiber integral and normal, where $\Gm$ acts on $\bA^1$ by scaling, 

    \item a $\Gm$-equivariant section $\sigma_{\cX}$ of $\pi_X$,

    \item an effective $\QQ$-divisor $\Delta_{\cX}$ on $\cX$ such that $(\cX, \Delta_{\cX}+X_0)$ is plt, where $X_0\coloneqq \pi_X^{-1}(0)$, and

    \item a $\Gm$-equivariant isomorphism $$(\cX, \Delta_{\cX})|_{\bA^1\setminus \{0\}}\cong (X, \Delta)\times (\bA^1\setminus \{0\})$$ over $\bA^1\setminus \{0\}$ and $\sigma_{\cX}(t)=(x,t)$ for any $t\in \bA^1\setminus \{0\}$ under this isomorphism.
\end{itemize}

\end{definition}

In particular, $\mathrm{Supp}(\Delta_{\cX})$ does not contain $X_0$, and $(X_0, \Delta_{X_0})$ is a klt pair by adjunction, where $$\Delta_{X_0}\coloneqq (K_{\cX}+\Delta_{\cX}+X_0)|_{X_0}-K_{X_0}.$$ We also write $x_0\coloneqq \sigma_{\cX}(0)\in X_0$. Note that $\Delta_{\cX}$ is unique in the above definition, since it is the closure of $\Delta\times (\bA^1\setminus \{0\})$ in $\cX$.

In practice, we mainly work with special degenerations that are induced by certain divisorial valuations.

\begin{definition}
Let $x\in (X, \Delta)$ be a klt singularity. A prime
divisor $E$ over $(X, \Delta)$ is a \emph{Koll\'ar component over $x\in (X, \Delta)$} if there is a proper birational morphism $\pi\colon Y\to X$ such that $\pi$ is an isomorphism over $X\setminus \{x\}$, $E$ is the unique $\pi$-exceptional prime divisor on $Y$, $\pi(E)=x$, $-E$
is $\QQ$-Cartier and $\pi$-ample, and $(Y, \pi^{-1}_*\Delta+E)$ is plt. The map $\pi\colon Y\to X$ is called the \emph{plt blow-up} associated to $E$. By adjunction, we may write
\[(K_Y+\pi^{-1}_*\Delta+E)|_E=K_E+\Delta_E,\]
so that $(E,\Delta_E)$ is a log Fano pair.
\end{definition}

By \cite{xu:finite-fund-group}, every klt singularity has a Koll\'ar component. Moreover, \cite[Theorem 1.3]{li-xu:kollar-component} gives
\begin{equation}\label{eq:hvol-divisor}
\hvol(x,X,\Delta)=\inf_{\substack{v\in \Val_{X,x} \text{ is divisorial}}} \hvol_{(X, \Delta)}(v).
\end{equation}

Starting with a Koll\'ar component $E$ over $x\in (X, \Delta)$, we may construct a special degeneration as follows. We first define 
$$\cX\coloneqq \Spec_X\left(\bigoplus_{m\in \ZZ} \mathfrak{a}_m t^{-m} \right)$$
where $\mathfrak{a}_m\coloneqq \pi_*\cO_Y(-mE)$ when $m\geq 0$, and $\mathfrak{a}_m\coloneqq \cO_X$ otherwise. Then we have a natural $\Gm$-equivariant morphism $\pi_X\colon \cX\to \bA^1$ with a section $\sigma_{\cX}$. As discussed in \cite{li-xu:kollar-component,li-xu:higher-rank,li-liu-xu:guide}, there is a $\QQ$-divisor $\Delta_{\cX}$ on $\cX$ so that $\pi_X\colon (\cX, \Delta_{\cX}; \sigma_{\cX})\to \bA^1$ is a special degeneration of $x\in (X, \Delta)$ with the central fiber 
\[X_0=\Spec(\mathrm{gr}_{E}(\cO(X))).\]

\subsection{Log Fano cones}\label{subsec:cone}

Let $X$ be a normal variety and $\TT_X$ be a torus acting on $X$. We write $M_X\coloneqq \mathrm{Hom}(\TT_X,\Gm)$ for its character lattice and $N_X\coloneqq \mathrm{Hom}(\Gm, \TT_X)$ for its cocharacter lattice. For any $\alpha\in M_X$, we write $\chi^{\alpha}\colon \TT_X\to \Gm$ for the corresponding homomorphism. 

Set $\mathfrak{t}_{X,\mathbb{R}}\coloneqq N_X\otimes_{\ZZ} \mathbb{R}$ and $M_{X,\mathbb{R}}\coloneqq M_X \otimes_{\ZZ} \mathbb{R}$. Then we have a natural pairing
\[\langle -, -\rangle\colon M_{X,\mathbb{R}}\times \mathfrak{t}_{X,\mathbb{R}}\to \mathbb{R}.\]
If $X=\Spec(R)$, we have a weight decomposition
\[R=\bigoplus_{\alpha\in M_X} R_{\alpha}.\]
Define
\[\Gamma_X\coloneqq \{\alpha\in M_X\mid R_{\alpha}\neq 0\}.\]

We consider the following class of affine varieties with torus actions.

\begin{definition}
Let $X$ be a variety and $\TT_X$ be a torus acting on $X$. The $\TT_X$-action is \emph{good} if it is effective and there is a unique closed orbit, which is a point (called the \emph{vertex}).

We say $(X,\Delta; \TT_X)$ is a \emph{log Fano cone} if the action of $\TT_X$ on $X$ is good, $X$ is affine, $(X,\Delta)$ is klt, and $\Delta$ is $\TT_X$-invariant.
\end{definition}

For any log Fano cone $(X,\Delta; \TT_X)$, we define the \emph{Reeb cone} as
\[\mathfrak{t}^+_{X,\RR}\coloneqq \{\xi\in \mathfrak{t}_{X,\RR}\mid \langle\alpha,\xi\rangle>0,~\text{for every } 0\neq \alpha\in \Gamma_X\}.\]
An element in $\mathfrak{t}^+_{X,\RR}$ is called a \emph{Reeb vector}. Any Reeb vector $\xi$ gives a quasi-monomial $\TT_X$-invariant valuation $\mathrm{wt}_{\xi}$ centered at the vertex, defined by
\[\mathrm{wt}_{\xi}\left(\sum r_{\alpha}\right)\coloneqq \min_{r_{\alpha}\neq 0}\langle\alpha, \xi\rangle\]
for $0\neq \sum r_{\alpha}\in \cO(X)$. We set $\hvol_{(X, \Delta)}(\xi)\coloneqq \hvol_{(X, \Delta)}(\mathrm{wt}_{\xi})$, $\vol_{X,x_0}(\xi)\coloneqq \vol_{X,x_0}(\mathrm{wt}_{\xi})$, and $A_{X,\Delta}(\xi)\coloneqq A_{X,\Delta}(\mathrm{wt}_{\xi})$.

A \emph{polarized log Fano cone} $(X,\Delta; \TT_X,\xi_X)$ is a log Fano cone $(X,\Delta; \TT_X)$ together with a Reeb vector $\xi_X$.

We use the following equivalent definition of the K-semistability of log Fano cones; see \cite{li-xu:higher-rank,li-wang-xu:alg-cone}.

\begin{definition}
A polarized log Fano cone $(X,\Delta; \TT_X,\xi_X)$ is \emph{K-semistable} if $\mathrm{wt}_{\xi_X}$ is the minimizer of the normalized volume function, i.e.~$\hvol(x_0, X, \Delta)=\hvol_{(X,\Delta)}(\xi_X)$, where $x_0\in X$ is the vertex.
\end{definition}

A Reeb vector $\xi$ is said to be \emph{quasi-regular} if $\mathrm{wt}_{\xi}$ is a divisorial
valuation; equivalently, $c\cdot \xi\in N_X$ for some $c\in \RR_{>0}$. As explained in \cite[2.13]{xu-zhuang:kaledin}, every quasi-regular Reeb vector $\xi$ generates a one-parameter subgroup $\chi_{\xi}\colon \Gm\to \TT_X$ and $E(\xi)\coloneqq (X\setminus \{x_0\})/\chi_{\xi}(\Gm)$ is a Koll\'ar component over the vertex $x_0\in X$ such that $\mathrm{wt}_{\xi}=c\cdot \mathrm{ord}_{E(\xi)}$ for some $c>0$. Moreover, we can define a $\QQ$-divisor $\Delta_{E(\xi)}$ by 
\begin{equation}\label{eq:def-deltaE}
p_{\xi}^*(K_{E(\xi)}+\Delta_{E(\xi)})= (K_{X}+\Delta)|_{X^{\circ}},
\end{equation}
where $$p_{\xi}\colon  X^{\circ}\coloneqq X\setminus \{x_0\}\longrightarrow (X\setminus \{x_0\})/\chi_{\xi}(\Gm)$$ is the quotient map. By \cite[Section~4]{kollar:seifert-bundle}, $(E(\xi), \Delta_{E(\xi)})$ is a log Fano pair.

We end this subsection with the following lemma.

\begin{lemma}\label{lem:weight-multiple-claim}
Let $f\colon X=\Spec(B)\to Y=\Spec(A)$ be a finite surjective morphism between affine varieties such that $Y$ is normal. Assume that tori $\TT_X$ and $\TT_Y$ act on $X$ and $Y$, respectively, with an isogeny $q\colon \TT_X\to \TT_Y$ such that $f$ is $q$-equivariant. 

\begin{enumerate}
    \item Composition with $q$ induces inclusions
    \[q_*\colon N_X\longhookrightarrow N_Y\]
    and
    \[q^*\colon M_Y\longhookrightarrow M_X.\]
    We also have a linear isomorphism 
    \begin{equation}\label{eq:induce-iso}
q_*\colon \mathfrak{t}_{X,\RR}\longrightarrow \mathfrak{t}_{Y,\RR}
\end{equation}
after tensoring $q_*$ with $\RR$.

\item We have $q^*(\Gamma_Y)\subset \Gamma_X$.

\item For any $0\neq \alpha\in \Gamma_X$, we can find $d\in \ZZ_{>0}$ so that $d\alpha\in q^*(\Gamma_Y)$.

\item If $A_0=\CC$ and the fixed locus $Y^{\TT_Y}$ consists of a single point $y_0$, then $B_0=\CC$ and $X^{\TT_X}$ consists of a single point $x_0=(f^{-1}(y_0))_{\mathrm{red}}$.
\end{enumerate}
\end{lemma}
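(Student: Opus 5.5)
For (1), I will use that an isogeny $q\colon \TT_X\to \TT_Y$ is a surjective homomorphism with finite kernel $K$. Composition gives $q_*\colon N_X=\mathrm{Hom}(\Gm,\TT_X)\to N_Y$ and $q^*\colon M_Y\to M_X$, $\chi\mapsto \chi\circ q$.

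\emph{Injectivity of $q^*$.} This follows from surjectivity of $q$.

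\emph{Injectivity of $q_*$.} If $q\circ\lambda$ is trivial for a cocharacter $\lambda$, then $\lambda(\Gm)\subset K$ is finite and connected, hence trivial.

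\emph{Isomorphism after tensoring with $\RR$.} The two lattices have the same rank, since $\dim\TT_X=\dim\TT_Y$. So $q_*\otimes\RR$ is an injective linear map between spaces of equal dimension, hence an isomorphism. The same argument shows $q^*(M_Y)$ has finite index in $M_X$.

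For (2), equivariance means that $f^\sharp\colon A\to B$ satisfies $f^\sharp(A_\alpha)\subset B_{q^*\alpha}$. Indeed, for $a\in A_\alpha$ and $s\in\TT_X$,
\[
s\cdot f^\sharp(a)=f^\sharp(q(s)\cdot a)=\chi^\alpha(q(s))\,f^\sharp(a).
\]
Since $f$ is surjective, $f^\sharp$ is injective. So $A_\alpha\neq0$ forces $B_{q^*\alpha}\neq0$.

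For (3), take $0\neq b\in B_\alpha$.
\begin{itemize}
\item Since $B$ is finite over $A$, $b$ is integral over $A$. Normality of $Y$ lets me take the minimal polynomial $P(T)=T^n+a_1T^{n-1}+\dots+a_n$ of $b$ over $\CC(A)$ with $a_i\in A$.
\item For $s\in\TT_X$, the element $s\cdot b=\chi^\alpha(s)b$ satisfies the polynomial $T^n+(s\cdot a_1)T^{n-1}+\cdots$. Uniqueness of the monic minimal polynomial then forces each nonzero $a_i$ to be a $\TT_X$-eigenvector of weight $i\alpha$. By injectivity of $f^\sharp$, this means $a_i$ is a $\TT_Y$-eigenvector of some weight $\beta$ with $q^*\beta=i\alpha$.
\item This step is the main subtlety: one must check that the action of $\TT_Y$ on $A$ is recovered, so that the invariance argument takes place inside $A$. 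I will handle it by noting that $q$ is surjective, so $a\in A$ is a $\TT_Y$-eigenvector iff $f^\sharp(a)$ is a $\TT_X$-eigenvector.
\item Since $b\neq0$ and $B$ is a domain, the polynomial is not just $T^n$ with $a_n=0$ and $b$ a root. More carefully, minimality gives $a_n\neq0$, because otherwise $T$ divides $P$ and irreducibility forces $P=T$, i.e.\ $b=0$.
\end{itemize}
Hence $n\alpha=q^*\beta$ with $\beta\in\Gamma_Y$, and $d=n$ works.

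For (4), I argue as follows.
\begin{itemize}
\item \emph{$B_0=\CC$.} If $0\neq b\in B_0$, the argument of (3) gives $a_i\in A_0=\CC$. So $b$ is algebraic over $\CC$, hence constant because $B$ is a domain.
\item \emph{Fixed points.} A $\TT_X$-fixed point $x$ maps to a point fixed by $q(\TT_X)=\TT_Y$, hence to $y_0$. So $X^{\TT_X}\subset f^{-1}(y_0)$.
\item \emph{$f^{-1}(y_0)$ is a single point.} The fibre $f^{-1}(y_0)$ is finite and $\TT_X$-stable. Since $\TT_X$ is connected, it fixes each point of the fibre. Thus $X^{\TT_X}=f^{-1}(y_0)$ set-theoretically, and it is nonempty by surjectivity. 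The fixed points correspond to maximal ideals containing all $B_\alpha$ with $\alpha\neq0$. Since $B_0=\CC$, the ideal $\bigoplus_{\alpha\neq0}B_\alpha$ is maximal, so there is exactly one fixed point.
\end{itemize}
This gives $x_0=(f^{-1}(y_0))_{\mathrm{red}}$.
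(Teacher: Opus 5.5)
Your proposal is correct and follows essentially the same route as the paper: the monic-minimal-polynomial uniqueness argument for (c), reused with $\alpha=0$ to get $B_0=\CC$, and connectedness of $\TT_X$ to see that the nonempty finite fibre $f^{-1}(y_0)$ consists of fixed points. The only difference is the last step of (d), where the paper deduces a unique closed orbit from $B_0=\CC$ and you instead argue with maximal ideals. There, note that $\bigoplus_{\alpha\neq 0}B_\alpha$ is not an ideal merely because $B_0=\CC$ (consider $\CC[t,t^{-1}]$). Your uniqueness conclusion still holds, since you already have a fixed point: its maximal ideal contains this codimension-one subspace and therefore equals it.
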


\begin{proof}
Since $q$ is finite and surjective, part (a) is standard, while part (b) follows directly from the definition and the $q$-equivariance of $f$.

For part (c), we may take $0\neq b\in B_{\alpha}$ and let
\[p_b(z)=z^d+c_1z^{d-1}+\cdots +c_d\in \CC(A)[z]\]
be the monic minimal polynomial of $b$ over $\CC(A)$. Since $b$ is integral over $A$ and $A$ is normal, every coefficient $c_i$ belongs to $A$ by \citestacks{00H7}.

Recall that for any $\lambda\in \TT_X$, we have $\lambda\cdot b=\chi^\alpha(\lambda)b$. Applying $\lambda$ to $p_b(z)$ gives the monic minimal polynomial of $\lambda\cdot b$ over $\CC(A)$. On the other hand, scaling the variable
shows that the monic minimal polynomial of
$\chi^\alpha(\lambda)b$ is $\chi^{d\alpha}(\lambda)
 p_b\bigl(\chi^{-\alpha}(\lambda)z\bigr)$. By uniqueness of the monic minimal polynomial, we get
\begin{align*}
 z^d+(\lambda\cdot c_1)z^{d-1}+\cdots+(\lambda\cdot c_d)
 &=\chi^{d\alpha}(\lambda)
   p_b\bigl(\chi^{-\alpha}(\lambda)z\bigr)\\
 &=z^d+\chi^\alpha(\lambda)c_1z^{d-1}
   +\cdots+\chi^{d\alpha}(\lambda)c_d.
\end{align*}
Thus every nonzero $c_i$ is $\TT_X$-homogeneous of weight $i\alpha$. Note that the constant term $c_d$ is nonzero because $b\neq 0$. Since $f$ is $q$-equivariant and $q$ is surjective, we see that $d\alpha\in q^*(\Gamma_Y)$. This proves part (c).

For part (d), we take $b\in B_0$. Then the same argument as above shows that every coefficient of $p_b(z)$ is fixed by the $\TT_X$-action. Since $q$ is surjective, we have $$A^{\TT_X}=A^{\TT_Y}=A_0=\CC$$ by our assumption. This implies that $b$ is algebraic over $\CC$. Since $\CC$ is algebraically closed, we get $b\in \CC$ and $B_0\subset \CC$. The reverse inclusion is clear, and we have $B_0=\CC$. In particular, the action of $\TT_X$ on $X$ has only one closed orbit. By our assumption, $y_0$ is fixed by $\TT_Y$, so $f^{-1}(y_0)$ is $\TT_X$-invariant. Since $\TT_X$ is connected, we see that $f^{-1}(y_0)\subset X^{\TT_X}$ as sets. In particular, there is a fixed point $x_0$ of $\TT_X$, hence it is the unique closed orbit. Therefore, we also have $f^{-1}(y_0)=\{x_0\}$ as sets. This proves part (d).
\end{proof}

\subsection{Stable degeneration}
Let $x\in (X=\Spec(R), \Delta)$ be a klt singularity and $v$ be a minimizer of the normalized volume function.

By \cite[Theorem~1.1]{xu-zhuang:stable-deg}, $X_0= \Spec(\mathrm{gr}_v(R))$ is a normal variety and there exists a $\QQ$-divisor $\Delta_{X_0}$ and a torus $\TT_{X_0}$ such that $(X_0, \Delta_{X_0}; \TT_{X_0})$ is a log Fano cone. Moreover, by \cite[Theorem~1.1]{li-xu:higher-rank}, the minimizer $v$ induces a Reeb vector $\xi_{X_0}$ such that $(X_0, \Delta_{X_0}; \TT_{X_0}, \xi_{X_0})$ is a K-semistable log Fano cone with the vertex $x_0$.

Note that in general, $v$ may have higher rational rank, so there is no canonical choice of special degeneration from $x\in (X, \Delta)$ to $x_0\in (X_0, \Delta_{X_0})$. However, according to \cite[Theorem~4.14]{li-xu:higher-rank} (see also \cite[Theorem~1.6]{xu-zhuang:stable-deg}), we can take a Koll\'ar component $E$ near the minimizer $v$ so that the induced special degeneration has central fiber isomorphic to $x_0\in (X_0, \Delta_{X_0})$. In summary, we get:

\begin{theorem}\label{thm:stab-deg}
Let $v$ be a minimizer of the normalized volume function of a klt singularity $x\in (X=\Spec(R), \Delta)$. Then there exists a special degeneration of $x\in (X, \Delta)$ induced by a Koll\'ar component, such that $X_0\cong \Spec(\mathrm{gr}_v(R))$, $(X_0, \Delta_{X_0}; \TT_{X_0}, \xi_{X_0})$ is a K-semistable log Fano cone, and 
\begin{equation}\label{eq:vol}
\hvol(x, X, \Delta)=\hvol(x_0, X_0, \Delta_{X_0}).
\end{equation}
\end{theorem}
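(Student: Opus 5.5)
This statement collects known results, so the proof will assemble citations rather than construct anything new. Fix a minimizer $v$ of $\hvol_{(X,\Delta)}$ on $\Val_{X,x}$; it exists by \cite{blum:minimizer} and is quasi-monomial by \cite{xu:quasi-monomial}. By \cite[Theorem~1.1]{xu-zhuang:stable-deg}, $\mathrm{gr}_v(R)$ is finitely generated, and $X_0=\Spec(\mathrm{gr}_v(R))$ is normal and carries a $\QQ$-divisor $\Delta_{X_0}$ and a torus $\TT_{X_0}$ making $(X_0,\Delta_{X_0};\TT_{X_0})$ a log Fano cone. Here $\TT_{X_0}$ is the torus whose character lattice is generated by the value group of $v$. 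Then \cite[Theorem~1.1]{li-xu:higher-rank} says that the Reeb vector $\xi_{X_0}$ induced by $v$ makes this cone K-semistable. In other words, $\mathrm{wt}_{\xi_{X_0}}$ minimizes $\hvol$ at the vertex $x_0$.

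To realize this as a special degeneration coming from a Koll\'ar component, I would use \cite[Theorem~4.14]{li-xu:higher-rank} (equivalently \cite[Theorem~1.6]{xu-zhuang:stable-deg}). Pick a quasi-monomial simplicial cone in $\Val_{X,x}$ containing $v$, and perturb $v$ to a rational divisorial valuation $w=c\cdot\ord_E$ close to $v$. For $w$ close enough, $\mathrm{gr}_w(R)\cong\mathrm{gr}_v(R)$ as rings, since both are degenerations along the same finitely generated filtration structure. Moreover, $E$ is a Koll\'ar component, because it corresponds to a quasi-regular Reeb vector near $\xi_{X_0}$ and $E(\xi)$ is a Koll\'ar component, as explained in Subsection~\ref{subsec:cone}. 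The construction in Subsection~\ref{subsec:special-deg} applied to $E$ then gives a special degeneration with central fiber $\Spec(\mathrm{gr}_E(R))\cong X_0$. The boundary obtained by adjunction agrees with $\Delta_{X_0}$, since both are the degeneration of $\Delta$ under the same filtration.

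For the equality \eqref{eq:vol}, compare $v$ with $\mathrm{wt}_{\xi_{X_0}}$ through the degeneration. First, $\vol(v)=\vol(\mathrm{wt}_{\xi_{X_0}})$, because $\mathrm{gr}_v(R)$ and $R$ have the same colengths of valuation ideals: $\mathrm{length}(R/\mathfrak{a}_m(v))=\mathrm{length}(\mathrm{gr}_v R/\mathfrak{a}_m(\mathrm{wt}_\xi))$. Second, $A_{X,\Delta}(v)=A_{X_0,\Delta_{X_0}}(\xi_{X_0})$, by \cite[Lemma 2.58]{li-liu-xu:guide}, or via the special degeneration and adjunction. Together these give
\[
\hvol(x,X,\Delta)=\hvol_{(X,\Delta)}(v)=\hvol_{(X_0,\Delta_{X_0})}(\xi_{X_0})=\hvol(x_0,X_0,\Delta_{X_0}),
\]
where the last equality is K-semistability.

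I expect the hardest step to be matching the log discrepancy of $v$ with that of the Reeb vector, which requires following the boundary through the degeneration. This is exactly what the cited results of Li--Xu and Xu--Zhuang handle, so the proof amounts to citing them.
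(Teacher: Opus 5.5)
Your proposal is correct and follows the paper's route: the paper also just combines \cite[Theorem~1.1]{xu-zhuang:stable-deg}, \cite[Theorem~1.1]{li-xu:higher-rank}, and \cite[Theorem~4.14]{li-xu:higher-rank} (or \cite[Theorem~1.6]{xu-zhuang:stable-deg}) to obtain a Koll\'ar component whose degeneration has central fiber $X_0$. Your explicit derivation of \eqref{eq:vol} from $\vol(v)=\vol(\mathrm{wt}_{\xi_{X_0}})$, $A_{X,\Delta}(v)=A_{X_0,\Delta_{X_0}}(\xi_{X_0})$ and K-semistability spells out what the paper leaves implicit, but one caution applies: that $E$ is a Koll\'ar component over $x\in X$ is supplied by the cited theorem (finite generation of $\mathrm{gr}_E R$ with klt central fiber), not by the remark in Subsection~\ref{subsec:cone} alone, which only concerns $E(\xi_w)$ over the vertex $x_0\in X_0$.
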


\section{Lifting of degenerations}

In this section, we fix a finite surjective morphism $$f\colon \big(x\in (X, \Delta_X)\big)\longrightarrow \big(y\in (Y, \Delta_Y)\big)$$ between klt singularities such that $K_X+\Delta_X=f^*(K_Y+\Delta_Y)$. Starting from a special degeneration of $y\in (Y, \Delta_Y)$, we will construct a compatible special degeneration of $x\in (X, \Delta_X)$ with nice properties.

\subsection{Lifting of special degenerations}

We first show that up to a finite base change and under a mild assumption, any special degeneration of $y\in (Y, \Delta_Y)$ can be lifted to a special degeneration of $x\in (X, \Delta_X)$.

\begin{proposition}\label{prop:pull-back-weak}
Let $\pi_Y\colon (\cY, \Delta_{\cY}; \sigma_{\cY})\to \bA^1$ be a special degeneration of $y\in (Y, \Delta_Y)$. Assume that the natural inclusion
\[\cO(\cY)\subset \cO(Y)[t,t^{-1}]\]
of graded rings induces an identification $\cO(\cY)_0=\cO(Y)$ in degree $0$, where the gradings are induced by the $\Gm$-action. Then up to a finite base change $$\bA^1\longrightarrow \bA^1,\quad t\longmapsto t^n$$ for some $n\in \ZZ_{>0}$, 
there exists a special degeneration 
$$\pi_X\colon (\cX, \Delta_{\cX}; \sigma_{\cX})\longrightarrow \bA^1$$
of $x\in (X, \Delta_X)$ with a finite surjective $\Gm$-equivariant morphism $g\colon \cX\to \cY$ over $\bA^1$ so that
\begin{enumerate}
    \item under the isomorphism $X\times (\bA^1\setminus \{0\})\cong \cX|_{\bA^1\setminus \{0\}}$, we have $g|_{\bA^1\setminus \{0\}}=f\times \mathrm{id}_{\bA^1\setminus \{0\}}$, 

    \item $\deg(g_0)=\deg(f)$,

    \item $K_{X_0}+\Delta_{X_0}=g_0^*(K_{Y_0}+\Delta_{Y_0})$, and

    \item $\sigma_{\cY}=g\circ \sigma_{\cX}$.
\end{enumerate}
\end{proposition}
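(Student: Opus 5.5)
The plan is to construct $\cX$ as a normalization of $\cY$ in the function field of $X\times\bA^1$, use a base change to make the central fiber reduced, and then deduce the remaining properties from finite log-crepant pullback, adjunction and the $\Gm$-action. Write $A=\cO(Y)$, $B=\cO(X)$, so that $A\subset B$ is a finite extension of normal domains of degree $\deg(f)$. Set $\mathcal{A}\coloneqq\cO(\cY)\subset A[t,t^{-1}]$. Define $\mathcal{B}$ to be the integral closure of $\mathcal{A}$ in $B[t,t^{-1}]$.

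\textbf{Basic properties of $\mathcal{B}$.} Since $B[t,t^{-1}]$ is normal with fraction field $\CC(X)(t)$, the ring $\mathcal{B}$ is the normalization of $\mathcal{A}$ in $\CC(X)(t)$. It is therefore finite over $\mathcal{A}$ and normal. Because $\mathcal{A}\subset B[t,t^{-1}]$ is a graded subring, its integral closure is graded, so $\cX\coloneqq\Spec(\mathcal{B})$ carries a $\Gm$-action and $g\colon\cX\to\cY$ is finite, surjective and $\Gm$-equivariant. Inverting $t$ gives $\mathcal{A}[t^{-1}]=A[t,t^{-1}]$, whose integral closure in $B[t,t^{-1}]$ is $B[t,t^{-1}]$ itself; this yields (a).

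The hypothesis $\mathcal{A}_0=A$ is used to identify the degree-zero part. Indeed $\mathcal{B}_0\subset B$ consists of elements integral over $A$, so $\mathcal{B}_0=B$ and the isomorphism over $\bA^1\setminus\{0\}$ is compatible with the gradings. Flatness over $\bA^1$ follows since $\cX$ is integral and dominates $\bA^1$. For the section, the composite $\mathcal{A}\to\CC[t]$ given by $\sigma_{\cY}$ extends over $t\neq 0$ to $\mathcal{B}\to B[t,t^{-1}]\to\CC[t,t^{-1}]$ (evaluation at $x$). The image of $\mathcal{B}$ is integral over $\CC[t]$, hence lies in $\CC[t]$, and this defines $\sigma_{\cX}$ satisfying (d).

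\textbf{Central fiber and plt condition.} Set $\Delta_{\cX}$ to be the closure of $\Delta_X\times(\bA^1\setminus\{0\})$. Since $K_X+\Delta_X=f^*(K_Y+\Delta_Y)$, the two sides agree away from the central fiber, and we get
\[
K_{\cX}+\Delta_{\cX}+\textstyle\sum_i X_0^{i}=g^*(K_{\cY}+\Delta_{\cY}+Y_0)+\sum_i(1-m_i)X_0^i,
\]
where $g^*Y_0=\sum m_iX_0^i$. Now perform the base change $t\mapsto t^n$ with $n$ divisible by all the $m_i$, and renormalize. By an Abhyankar-type computation along the generic points of the $X_0^i$, the new central fiber becomes reduced, so $(\cX,\Delta_{\cX}+X_0)$ is the log-crepant pullback of $(\cY,\Delta_{\cY}+Y_0)$.

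Plt then transfers as follows. Let $E$ be a divisor exceptional over $\cX$. Then $E$ dominates a divisor $F$ over $\cY$ with $A_{\cX,\Delta_{\cX}+X_0}(E)=r\,A_{\cY,\Delta_{\cY}+Y_0}(F)$ for the ramification index $r$. The divisor $F$ cannot be $Y_0$, since then $E$ would be a component of $X_0$. Hence $F$ is exceptional or horizontal, and in either case the log discrepancy is positive. So $(\cX,\Delta_{\cX}+X_0)$ is plt. It follows that $X_0$ is normal and its components are pairwise disjoint.

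Each component of $X_0$ is $\Gm$-invariant and closed. The $\Gm$-action contracts every point toward the fiber over $y_0$, which by the argument of Lemma~\ref{lem:weight-multiple-claim}(d) is the single point $x_0=\sigma_{\cX}(0)$. Thus every component contains $x_0$, so $X_0$ is irreducible, hence integral and normal.

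\textbf{Degree and adjunction.} For (b), localize at the generic point of $Y_0$, which gives a DVR since $\cY$ is normal. There $g$ is finite flat of degree $\deg(f)$, and the fiber is reduced and irreducible, so $\deg(g_0)=\deg(f)$. For (c), restrict the log-crepant identity to $X_0$ and apply adjunction on both sides.

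\textbf{Main obstacle.} The step I expect to be hardest is the base-change step making $X_0$ reduced. One must check carefully that normalization after base change does not destroy the earlier identifications (including the degree-zero part, now for $\mathcal{A}[t^{1/n}]$), and that the multiplicities really become $1$ along every component. A secondary delicate point is the plt transfer, where one has to ensure that the log discrepancy comparison is valid for all divisors over $\cX$.
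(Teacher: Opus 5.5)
Your construction and most of the checks match the paper's proof. These are the normalization of $\cY$ in $\CC(X)(t)$, the identity $\mathcal{B}_0=\cO(X)$, the base change $t\mapsto t^n$, plt of $(\cX,\Delta_{\cX}+X_0)$ by log-crepancy (your direct argument is just \cite[Proposition~5.20]{kollar-mori}), normality of $X_0$ from plt, and (b) via the DVR at the generic point of $Y_0$.

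\textbf{The gap is the irreducibility of $X_0$.} You assert that $g_0^{-1}(y_0)$ is the single point $x_0$ ``by the argument of Lemma~\ref{lem:weight-multiple-claim}(d)''. That argument fails here for two reasons:
\begin{enumerate}
\item It uses minimal polynomials over the fraction field, so it needs the source to be a domain, which is exactly what you are trying to prove.
\item It needs $\cO(Y_0)_0=\CC$. This holds for degenerations from Koll\'ar components but is not among the hypotheses.
\end{enumerate}
Even granting $\cO(Y_0)_0=\CC$, the integrality argument applied to a reduced $X_0=\bigsqcup_i X_0^i$ only shows that $\cO(X_0)_0$ is a finite reduced $\CC$-algebra, i.e.\ $\CC^k$ with $k$ the number of connected components. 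Applied componentwise, it gives one vertex on each $X_0^i$, but nothing forces these vertices to coincide.

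Your argument never uses that $X$ is local, i.e.\ $f^{-1}(y)=\{x\}$. Without this the conclusion is false. For example, take $Y=\Spec\CC[u]_{(u)}$ with $\cY=\Spec \cO(Y)[t,u/t]$, and let $X$ be the normalization of $Y$ in $\CC(u)(\sqrt{1+u})$. Then $y$ has two preimages, and the central fiber $\Spec\CC[w,s]/(s^2-1)$ is two disjoint lines. Compare the sum in Corollary~\ref{cor:global}.

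\textbf{How to fix it.} The missing step is the paper's.
\begin{enumerate}
\item Since $(t)$ is homogeneous, $\cO(X_0)_0=\mathcal{B}_0/(t\mathcal{B})_0$ is a quotient of $\mathcal{B}_0=\cO(X)$.
\item $\cO(X)$ is local, so $\cO(X_0)_0$ is local and has no nontrivial idempotents.
\item Idempotents of a $\ZZ$-graded reduced ring lie in degree $0$, so $X_0$ is connected. Being normal, it is integral.
\end{enumerate}
You proved $\mathcal{B}_0=\cO(X)$ but never used it. This is exactly where the hypothesis $\cO(\cY)_0=\cO(Y)$, together with locality of $X$, enters.

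\textbf{A smaller point.} Your displayed identity with the correction term $\sum_i(1-m_i)X_0^i$ is wrong. By Hurwitz, $K_{\cX}-g^*K_{\cY}$ has coefficient $m_i-1$ along $X_0^i$. So $K_{\cX}+\Delta_{\cX}+(X_0)_{\mathrm{red}}=g^*(K_{\cY}+\Delta_{\cY}+Y_0)$ holds already before base change. The base change is needed not for log-crepancy but to get $\mathrm{div}_{\cX}(t)=\sum_i X_0^i$. You should then say that $X_0$ is generically reduced and $S_1$ (a Cartier divisor on a normal scheme), hence reduced. This slip does not affect the rest of your argument.
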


\begin{proof}
Let $\cX$ be the normalization of $\cY$ in $\CC(X)(t)$. Then we have an induced flat morphism $\pi_X\colon \cX\to \bA^1$ and a finite surjective morphism $g\colon \cX\to \cY$. Note that the induced $\Gm$-coaction on $\CC(Y)(t)$ fixes $\CC(Y)$, so it extends to a $\Gm$-action on $\CC(X)(t)$. From the functoriality of normalization, we then get a $\Gm$-action on $\cX$ such that $\pi_X$ and $g$ are both $\Gm$-equivariant. By \citestacks{03GV}, part (a) holds. Therefore, we have a natural section $\sigma'_{\cX}$ of $\pi_X|_{\bA^1\setminus \{0\}}$ so that $\sigma_{\cY}|_{\bA^1\setminus \{0\}}=g|_{\bA^1\setminus \{0\}}\circ \sigma'_{\cX}$. Since $g$ is finite, by the valuative criterion, $\sigma'_{\cX}$ uniquely extends to a section $\sigma_{\cX}$ of $\pi_{X}$ such that $\sigma_{\cY}=g\circ \sigma_{\cX}$.

Let $X_{0,1},\ldots, X_{0,m}$ be prime divisors on $\cX$ that dominate $Y_0$, and $\eta_1,\ldots,\eta_m$ be their generic points. Then they are precisely irreducible components of $X_0$. Let $\eta$ be the generic point of $Y_0$ and $e_i$ be the ramification index of each $\cO_{\cY, \eta}\hookrightarrow \cO_{\cX, \eta_i}$. We take $n$ to be any integer such that $e_i\mid n$ for each $i$. By \citestacks{0BRM} and \citestacks{09E7}, after base change along $\bA^1\to \bA^1$, $t\mapsto t^n$ and normalization, we may assume that $e_i=1$ for each $i$ and 
\[\mathrm{div}_{\cX}(t)=\sum X_{0,i}\]
as divisors on $\cX$. In particular, $X_0$ is generically reduced. Since $\cX$ is normal, we see that $X_0$ satisfies $S_1$, hence reduced. 
 
Now, we define a $\QQ$-divisor $\Delta_{\cX}$ on $\cX$ by the formula
\begin{equation}\label{eq:crepant}
K_{\cX}+\Delta_{\cX}+X_0=g^*(K_{\cY}+\Delta_{\cY}+Y_0).
\end{equation}
Since $\mathrm{Supp}(\Delta_{\cY})$ does not contain $Y_0$ and $e_i=1$, the Hurwitz formula implies that $\mathrm{coeff}_D(\Delta_{\cX}+X_0)=1$ for every prime divisor $D$ on $\cX$ that dominates $Y_0$. Since $X_0$ is reduced, we conclude that $\mathrm{Supp}(\Delta_{\cX})$ does not contain any component of $X_0$ as well. In particular, $\Delta_{\cX}$ is the closure of $\Delta_{X}\times (\bA^1\setminus \{0\})$ and is effective. Thus, $(\cX, \Delta_{\cX}+X_0)$ is plt by \eqref{eq:crepant} and \cite[Proposition~5.20]{kollar-mori}. Moreover, from $g^*Y_0=X_0$, we get
\[K_{\cX}+\Delta_{\cX}=g^*(K_{\cY}+\Delta_{\cY}).\]
By \cite[Proposition~2.14(i)]{lazic:note}, since $(\cY, \Delta_{\cY})$ is klt, we see that $\lfloor \Delta_{\cX}\rfloor=0$. Then $$X_0=\lfloor \Delta_{\cX}+X_0 \rfloor$$ is normal by \cite[Theorem~4.16]{Kollar13}.

Now, we claim that $X_0$ is connected. Set $A\coloneqq \cO(\cY)$, $B\coloneqq \cO(\cX)$, $S\coloneqq \cO(X)$, $R\coloneqq \cO(Y)$, and $B'\coloneqq \cO(X_0)=B/(t)$. Then $\cO(\cX|_{\bA^1\setminus \{0\}})=S[t, t^{-1}]$ and all these rings are naturally $\ZZ$-graded. By our assumption, we have $A_0=R$. Since each idempotent of a $\ZZ$-graded reduced ring lies in the degree $0$ part, it suffices to show that $B'_0$ has no nontrivial idempotents.

From the construction, we see that $(S[t,t^{-1}])_0=S$. By the inclusion $B\subset S[t,t^{-1}]$, we have $B_0\subset S$. Conversely, $S$ is integral over $R=A_0\subset A$. Hence, every element of $S\subset \CC(X)(t)$ is integral over $A$ and
therefore belongs to the integral closure $B$. Since the $\Gm$-coaction in $S[t,t^{-1}]$ fixes $S$, we have $S\subset B_0$ and get $B_0=S$. As the ideal $(t)\subset B$ is homogeneous, we obtain
\[B'_0=(B/(t))_0=B_0/(tB)_0.\]
In particular, $B'_0$ is local as $S=B_0$ is. Thus, $B'_0$ has no nontrivial idempotents and we conclude that $X_0$ is connected.

Therefore, $\pi_X\colon (\cX, \Delta_{\cX}; \sigma_{\cX})\to \bA^1$ is a special degeneration of $x\in (X, \Delta_X)$.  Part (c) follows directly from the adjunction. Finally, part (b) can be deduced by applying \citestacks{09E8} to $\cO_{\cY, \eta}\hookrightarrow \cO_{\cX, \eta'}$, where $\eta'$ is the generic point of $X_0$, as the corresponding ramification index is $1$.
\end{proof}

Note that by the construction in Section~\ref{subsec:special-deg}, the above assumption holds whenever $\pi_Y$ is induced by a Koll\'ar component.


\subsection{Lifting of log Fano cone structures}

Now, we discuss the lifting problem for a log Fano cone structure. We start with the following general lemma, which may be known to experts. Our argument below is motivated by the proper version in \cite[Proposition~2.4]{brion:purity-quiver}.

For any connected scheme $X$ of finite type over $\CC$ with a closed point $x\in X$, we denote by $\pi_1^{\mathrm{\acute et}}(X, x)$ the corresponding \'etale fundamental group. If $Y$ is also a connected finite type $\CC$-scheme with a closed point $y\in Y$ and a morphism $f\colon X\to Y$ such that $f(x)=y$, then we denote by
\[f_*\colon \pi_1^{\mathrm{\acute et}}(X, x)\longrightarrow \pi_1^{\mathrm{\acute et}}(Y, y)\]
the induced continuous homomorphism.

\begin{lemma}\label{lem:lift-1}
Let $p\colon V\to U$ be a finite \'etale surjective morphism between connected schemes of finite type over $\CC$. Assume that a torus $\TT$ acts on $U$. Then there exists a torus $\wt{\TT}$ acting on $V$ with an isogeny $q\colon \wt{\TT} \to \TT$ such that $p$ is equivariant with respect to $q$. Moreover, if the action of $\TT$ is effective, then we can choose $\wt{\TT}$ so that the action on $V$ is also effective.
\end{lemma}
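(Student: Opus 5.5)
The strategy is to realize $\wt{\TT}$ as a finite-index torus inside the group of automorphisms of $V$ that lift the $\TT$-action, following the proof of \cite[Proposition~2.4]{brion:purity-quiver}.

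First, form the pullback of $p$ along the action map. Let $a\colon \TT\times U\to U$ be the action and $\mathrm{pr}_2\colon \TT\times U\to U$ the projection. Consider the two finite étale covers $a^*V$ and $\mathrm{pr}_2^*V=\TT\times V$ of $\TT\times U$. They agree on $\{1\}\times U$. Since $\TT$ is connected, the fibers of $\mathrm{pr}_2$ are connected. The goal is to show that these two covers become isomorphic after pulling back along an isogeny $\wt{\TT}\to\TT$.

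Second, reduce to fundamental groups. For a torus, $\pi_1^{\mathrm{\acute et}}(\TT)\cong\widehat{\ZZ}^r$. By the Künneth formula for étale fundamental groups of connected finite type $\CC$-schemes when one factor is a torus (valid in characteristic $0$), $\pi_1(\TT\times U)\cong \pi_1(\TT)\times\pi_1(U)$. The cover $a^*V$ corresponds to a finite $\pi_1(\TT\times U)$-set whose restriction to $\pi_1(U)$ is the fiber set $F$ of $p$. The restriction to the $\pi_1(\TT)$ factor gives a homomorphism $\widehat{\ZZ}^r\to \mathrm{Sym}(F)$ that commutes with the $\pi_1(U)$-action, and this homomorphism has finite image. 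Choose $n$ so that $n\widehat{\ZZ}^r$ lies in its kernel, and let $q\colon\wt{\TT}\to\TT$ be the $n$-th power map. Then $(q\times\mathrm{id})^*a^*V\cong \wt{\TT}\times V$ as covers of $\wt{\TT}\times U$. We normalize this isomorphism to be the identity over $1\in\wt{\TT}$, which is possible since $\wt{\TT}$ is connected. The isomorphism yields a morphism $\tilde a\colon\wt{\TT}\times V\to V$ lifting $a\circ(q\times\mathrm{id})$.

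Third, check that $\tilde a$ is a group action. The two maps $\tilde a\circ(m\times\mathrm{id})$ and $\tilde a\circ(\mathrm{id}\times\tilde a)$ from $\wt{\TT}\times\wt{\TT}\times V$ to $V$ both lift the same map to $U$. They also agree on $\{(1,1)\}\times V$. Lifts of a fixed map to a finite étale cover are determined on connected components, and $\wt{\TT}\times\wt{\TT}\times V'$ is connected for each component $V'$ of $V$. Hence the two maps coincide. The same argument gives the identity axiom, and equivariance holds by construction.

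Fourth, arrange effectivity. Let $K\subset\wt{\TT}$ be the kernel of the action on $V$. Because $p$ is equivariant, $K\subset q^{-1}(\ker(\TT\text{-action}))=\ker q$, which is finite. Replacing $\wt{\TT}$ by the torus $\wt{\TT}/K$ and $q$ by the induced isogeny $\wt{\TT}/K\to\TT$ makes the action effective.

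The main obstacle is the second step: justifying the Künneth decomposition of $\pi_1(\TT\times U)$ for non-proper $U$. If that is problematic, I would instead argue through the cover $a^*V$ restricted to $\TT\times\{u\}$ for each $u$. Each such restriction is a finite étale cover of a torus, hence is trivialized by some isogeny, and one needs a uniform choice of isogeny. A uniform $n$ exists because the degree is bounded by $\deg p$, so $n=(\deg p)!$ works for every point. One then glues over $U$ using rigidity of finite étale covers.
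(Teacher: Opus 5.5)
Your proposal is correct and is essentially the paper's argument: the paper likewise uses the K\"unneth formula (applied to $\TT'\times V$ rather than $\TT\times U$) to pick out the finite-index subgroup of $\pi_1^{\mathrm{\acute et}}(\TT)$ cut out by the orbit map. It then lifts $(\lambda,v)\mapsto q'(\lambda)\cdot p(v)$ through $p$ by the lifting criterion, verifies the action axioms by uniqueness of lifts, and quotients by the finite kernel to get effectivity. The K\"unneth step you flagged is not an obstacle in characteristic $0$ (the paper uses it for the non-proper $V$ too), so your fallback is unnecessary. Your explicit $n$-th power isogeny is a small simplification: it avoids citing that a connected finite \'etale cover of a torus is again a torus.
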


\begin{proof}
Let $e\in \TT$ be the identity. Let $u_0\in U$ and $v_0\in V$ be two closed points such that $p(v_0)=u_0$. Write
\[H\coloneqq p_*\pi_1^{\mathrm{\acute et}}(V,v_0)
\subset \pi_1^{\mathrm{\acute et}}(U,u_0),\]
which is an open subgroup of finite index by \cite[Thm.~5.4.2]{sza:galois}. Let
\[o_{u_0}\colon \TT\longrightarrow U,\qquad \lambda\longmapsto \lambda\cdot u_0\]
be the orbit morphism, and set $K\coloneqq o_{u_0,*}^{-1}(H)
\subset \pi_1^{\mathrm{\acute et}}(\TT,e)$, which is an open subgroup of finite index in $\pi_1^{\mathrm{\acute et}}(\TT,e)$ by \citestacks{0BND}. Note that $\pi_1^{\mathrm{\acute et}}(\TT,e)$ is abelian, so $K$ is a normal subgroup. Let $q'\colon \TT'\to \TT$ be the connected pointed Galois cover corresponding to $K$ with the chosen point $e'\in \TT'$ above $e\in \TT$ (cf.~\citestacks{03SF}). By \cite{brion:cover-alg-group}, we know that $\TT'$ is also a torus with identity $e'$ and $q'$ is an isogeny.

Now, we consider a morphism
\[b\colon \TT'\times V\longrightarrow U,\qquad
b(\lambda,v)=q'(\lambda)\cdot p(v).\]
By the K\"unneth formula, we know that $\pi_1^{\mathrm{\acute et}}(\TT'\times V,(e',v_0))$ is generated by $j_*\pi_1^{\mathrm{\acute et}}(\TT',e')$ and $l_*\pi_1^{\mathrm{\acute et}}(V,v_0)$, where
\[j\colon \TT'\times \{v_0\}\longhookrightarrow \TT'\times V,\quad l\colon \{e'\}\times V\longhookrightarrow \TT'\times V.\]
Since $b$ factors through $q'\times p$, we have
\[b_*j_*\pi_1^{\mathrm{\acute et}}(\TT',e')=o_{u_0,*}(q'_*\pi_1^{\mathrm{\acute et}}(\TT',e'))=o_{u_0,*}(K)\subset H\]
and
\[b_*l_*\pi_1^{\mathrm{\acute et}}(V,v_0)=p_*\pi_1^{\mathrm{\acute et}}(V,v_0)=H.\]
Consequently, $b_*(\pi_1^{\mathrm{\acute et}}(\TT'\times V,(e',v_0)))\subset H.$ Applying \cite[Prop.~5.5.5]{sza:galois}, this gives a unique morphism
\[a\colon \TT'\times V\longrightarrow V\]
such that $p\circ a=b$ and  $a(e',v_0)=v_0.$ In particular, 
\begin{equation}\label{eq:q-equi}
p(a(\lambda,v))=q'(\lambda)\cdot p(v).
\end{equation}
By the uniqueness of lifting, it is straightforward to see that $a$ gives an action of $\TT'$ on $V$. From \eqref{eq:q-equi}, we see that $p$ is equivariant with respect to $q'$.

Finally, assume that the action of $\TT$ on $U$ is effective. Let $N\coloneqq \ker(\TT'\to \Aut(V))$. Since $p$ is surjective and equivariant, we see that $q'(N)\subset \ker(\TT\to \Aut(U))$, which is trivial. Thus, $N\subset \ker(q')$ is finite. Therefore, $\wt{\TT}\coloneqq \TT'/N$ is a torus that acts effectively on $V$ and $q'$ factors as an isogeny $q\colon \wt{\TT}\to \TT$. This proves the lemma.
\end{proof}

When $f$ is not necessarily \'etale, we have the following result.

\begin{lemma}\label{lem:lift-2}
Let $f\colon X\to Y$ be a finite surjective morphism between varieties with $X$ normal. Assume that a torus $\TT_Y$ acts on $Y$ and there is a nonempty $\TT_Y$-invariant open subset $U\subset Y$ such that $f^{-1}(U)\to U$ is \'etale. Then there exists a torus $\TT_X$ acting on $X$ and an isogeny $q\colon \TT_X\to \TT_Y$ such that $f$ is equivariant with respect to $q$. If the action of $\TT_Y$ on $Y$ is effective, then we can take such $\TT_X$ so that its action on $X$ is also effective. 
\end{lemma}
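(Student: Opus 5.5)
We will reduce Lemma~\ref{lem:lift-2} to Lemma~\ref{lem:lift-1} on the étale locus and then extend the action to all of $X$ using normality. Set $V\coloneqq f^{-1}(U)$ and $p\coloneqq f|_V\colon V\to U$. Since $X$ is a variety, $V$ is a nonempty open subset of an irreducible scheme, hence connected; $U$ is connected for the same reason, and $p$ is finite étale and surjective. Lemma~\ref{lem:lift-1} then gives a torus $\TT'$ acting on $V$ and an isogeny $q'\colon \TT'\to\TT_Y$ with $p$ being $q'$-equivariant. It is convenient to restrict the $\TT_Y$-action to $U$, which is legitimate because $U$ is $\TT_Y$-invariant.

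To extend the action from $V$ to $X$, note that $X$ is the normalization of $Y$ in $\CC(X)$: $f$ is finite and $X$ is normal. Consider the action morphism $a\colon \TT'\times V\to V$. Composing with the inclusion gives a rational map $\TT'\times X\dashrightarrow X$. We will show that it extends via normalization. Consider the morphism
\[\TT'\times X\longrightarrow \TT'\times Y\xrightarrow{\ (\lambda,y)\mapsto q'(\lambda)\cdot y\ } Y.\]
The scheme $\TT'\times X$ is normal, being a product of a smooth variety with a normal variety over $\CC$. It is also finite over $\TT'\times Y$, and its function field contains $\CC(X)$ via the rational map above. Concretely, I would phrase this through the automorphism $\Phi\colon \TT'\times X\dashrightarrow \TT'\times X$, $(\lambda,x)\mapsto(\lambda,a(\lambda,x))$, which lies over the automorphism $\Psi(\lambda,y)=(\lambda,q'(\lambda)y)$ of $\TT'\times Y$. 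Then $\TT'\times X$ is the normalization of $\TT'\times Y$ in its function field, as a base change of a normalization by the smooth morphism $\TT'\to\Spec\CC$. Moreover, $\Phi$ induces a field automorphism compatible with $\Psi$. By functoriality of normalization, a birational self-map compatible with an automorphism of the base extends uniquely to an automorphism of the normalization. Hence $a$ extends to a morphism $\TT'\times X\to X$ over $\Psi$.

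The action axioms (identity and associativity) hold on the dense open $V$, and $X$ is separated, so they hold on $X$. The equivariance $f(\lambda\cdot x)=q'(\lambda)f(x)$ likewise holds on a dense open, hence everywhere. I expect the main obstacle to be carefully justifying that normalization commutes with the base change $\TT'\times -$ and that the extension is a morphism, not just a rational map. The cleanest route is to argue affine-locally over $\TT_Y$-invariant affine opens of $Y$, or globally via $\Spec_Y$ of the integral closure of $\cO_Y$ in $\CC(X)$. In that formulation the coaction on $f_*\cO_X$ is determined by integrality over $\cO_Y\otimes\cO(\TT')$, since integral closure commutes with smooth base change.

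Finally, suppose $\TT_Y$ acts effectively. We argue exactly as at the end of the proof of Lemma~\ref{lem:lift-1}. Let $N\coloneqq\ker(\TT'\to\Aut(X))$. By surjectivity and equivariance, $q'(N)$ acts trivially on $Y$, so $N\subset\ker q'$ and $N$ is finite. Then $\TT_X\coloneqq\TT'/N$ is a torus acting effectively on $X$, and $q'$ descends to an isogeny $q\colon\TT_X\to\TT_Y$, which gives the required action and isogeny in Lemma~\ref{lem:lift-2}.
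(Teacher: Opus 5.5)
Your proof is correct and follows essentially the same route as the paper. You lift the action on the \'etale locus via Lemma~\ref{lem:lift-1}, extend it to all of $X$ using the normality of $\TT'\times X$ and the finiteness of $f$, verify the action axioms and equivariance on a dense open subset, and quotient by the finite kernel for effectiveness. The only difference is cosmetic: the paper extends the action by taking the closure $\Gamma$ of the graph section in the fiber product $P=(\TT_X\times X)\times_{h,Y,f}X$ and noting that $\Gamma\to\TT_X\times X$ is finite birational onto a normal scheme, hence an isomorphism, whereas you invoke functoriality of normalization for the birational self-map $\Phi$ over the automorphism $\Psi$ of $\TT'\times Y$. These are two phrasings of the same normality argument.
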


\begin{proof}
By Lemma~\ref{lem:lift-1}, there is an isogeny $q\colon \TT_X\to \TT_Y$ of tori such that $\TT_X$ acts on $f^{-1}(U)$ and $f|_{f^{-1}(U)}\colon f^{-1}(U)\to U$ is equivariant with respect to $q$.

Let $a_Y\colon \TT_Y\times Y\to Y$ be the action morphism, and define
\[h\colon \TT_X\times X\longrightarrow Y,
 \qquad
 h(\lambda,x)\coloneqq a_Y(q(\lambda),f(x)).\]
Consider the fiber product 
\[\begin{tikzcd}
	P & X \\
	{\TT_X\times X} & Y.
	\arrow[from=1-1, to=1-2]
	\arrow[from=1-1, to=2-1]
	\arrow["f", from=1-2, to=2-2]
	\arrow["h", from=2-1, to=2-2]
\end{tikzcd}\] 
Then the first projection $p_1\colon P\to \TT_X\times X$ is finite. Since $f|_{f^{-1}(U)}$ is equivariant with respect to $q$, the action constructed in Lemma~\ref{lem:lift-1} gives a section
of $p_1|_{p_1^{-1}(\TT_X\times f^{-1}(U))}$. Let $\Gamma$ be the closure of its image in $P$ with the reduced scheme structure. Then from the construction, the induced morphism $\Gamma\to \TT_X\times X$ is finite and birational, therefore is an isomorphism by the normality of $\TT_X\times X$ and \citestacks{0AB1}. Composing its inverse with the second projection $P\to X$ gives a morphism
\begin{equation}
\label{eq:extended-X-action}
 a_X\colon \TT_X\times X\longrightarrow X
\end{equation}
which extends the action on $f^{-1}(U)$ and satisfies
\begin{equation}
\label{eq:g-equivariant-global}
 f(a_X(\lambda,x))=a_Y(q(\lambda),f(x)).
\end{equation}
The identity and associativity axioms hold over the dense open subsets $\TT_X\times f^{-1}(U)$ and $\TT_X\times \TT_X\times f^{-1}(U)$, respectively, so they hold everywhere. Thus, \eqref{eq:extended-X-action} is an action of $\TT_X$ on $X$, and \eqref{eq:g-equivariant-global} implies that $f$ is $q$-equivariant.

Finally, if the action of $\TT_Y$ on $Y$ is effective, then so is the action on $U$, and the same holds for the action of $\TT_X$ on $f^{-1}(U)$ by Lemma~\ref{lem:lift-1}. Thus, the last statement follows.
\end{proof}

The next result shows that the log Fano cone structure can be lifted along a finite log-crepant morphism.

\begin{proposition}\label{prop:lift-cone}
Let $(Y, \Delta_{Y}; \TT_Y)$ be a log Fano cone with the vertex $y_0$. Let $(X, \Delta_X)$ be a klt pair with a finite surjective morphism $f\colon X\to Y$ so that $K_X+\Delta_X=f^*(K_Y+\Delta_Y)$. Then there exists a torus $\TT_X$ such that $(X, \Delta_{X}; \TT_X)$ is a log Fano cone and the following hold:

\begin{enumerate}
    \item there exists an isogeny $q\colon \TT_X\to \TT_Y$ and $f$ is equivariant with respect to $q$, 

    \item $(f^{-1}(y_0))_{\mathrm{red}}$ is a single point and is the vertex of $(X, \Delta_{X}; \TT_X)$, and

    \item $q_*(\mathfrak{t}^+_{X,\RR})=\mathfrak{t}^+_{Y,\RR}$.
\end{enumerate}

\end{proposition}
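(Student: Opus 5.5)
We first lift the torus action by applying Lemma~\ref{lem:lift-2} to $f\colon X\to Y$; $X$ is normal since $(X,\Delta_X)$ is klt. For this we need a nonempty $\TT_Y$-invariant open $U\subset Y$ over which $f$ is étale. We take $U$ to be the complement in $Y$ of the branch locus of $f$ together with the singular locus of $Y$ and the image of the non-smooth locus of $X$. The branch locus of $f$ is a proper closed subset. It is $\TT_Y$-invariant because the torus is connected: the closure of the branch divisor's components is preserved up to the continuous action, so each component is invariant. More carefully, we can take $U$ to be the largest open set over which $f$ is étale. Its complement is intrinsically defined once we know translates of $f$ remain étale there. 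This intrinsic statement is not formal, since $f$ is not yet equivariant.

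To avoid that issue, we instead take $U\coloneqq\bigcap_{\lambda\in\TT_Y}\lambda\cdot U_0$, where $U_0$ is the étale locus. This intersection is open and nonempty: its complement is the image of $\TT_Y\times (Y\setminus U_0)$ under the action map, whose closure has dimension at most $\dim \TT_Y+\dim Y-1$. That bound is not enough, since it can exceed $\dim Y-1$. So we argue instead that the branch locus $B=Y\setminus U_0$ is $\TT_Y$-stable. By purity of the branch locus on the normal $X$, the branch locus in codimension one is determined by $\Delta$-type data: the ramification divisor $R$ satisfies $K_X+\Delta_X=f^*(K_Y+\Delta_Y)$, so $f^*\Delta_Y-\Delta_X=R$ and $f(R)\subset \mathrm{Supp}(\Delta_Y)$. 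Hence over $Y^\circ\coloneqq Y\setminus(\mathrm{Supp}\Delta_Y\cup Y_{\mathrm{sing}})$, which is $\TT_Y$-invariant since $\Delta_Y$ is invariant, $f$ is unramified in codimension one over a smooth base. By Zariski–Nagata purity, $f$ is then étale over $Y^\circ$, and we set $U=Y^\circ$. Lemma~\ref{lem:lift-2} now yields $\TT_X$ acting effectively on $X$ with an isogeny $q$ making $f$ $q$-equivariant, which is part (a).

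Next we verify the log Fano cone axioms and part (b). Since $Y$ has a good action with vertex $y_0$, we have $A_0=\CC$ and $Y^{\TT_Y}=\{y_0\}$. Lemma~\ref{lem:weight-multiple-claim}(d) then gives that $X^{\TT_X}=\{x_0\}=(f^{-1}(y_0))_{\mathrm{red}}$ and $B_0=\CC$. To see that $x_0$ is the unique closed orbit, we use the fact that every closed orbit maps onto a closed orbit. The image of a $\TT_X$-orbit is a $\TT_Y$-orbit because $q$ is surjective. Since $f$ is finite, the orbit $\TT_X\cdot x$ is closed if and only if $f(\TT_X\cdot x)=\TT_Y\cdot f(x)$ is closed, which forces $f(x)=y_0$ and hence $x=x_0$. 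The remaining properties are immediate: $X$ is affine because it is finite over affine $Y$, $(X,\Delta_X)$ is klt by hypothesis, and $\Delta_X$ is $\TT_X$-invariant. The last point holds because $\Delta_X=f^*(K_Y+\Delta_Y)-K_X$ is canonically determined, or more concretely $\Delta_X=f^*\Delta_Y-R$ with both terms invariant under the equivariant $f$.

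Finally, for part (c) we identify $\mathfrak t_{X,\RR}\cong\mathfrak t_{Y,\RR}$ via \eqref{eq:induce-iso}. For $\xi\in\mathfrak t_{X,\RR}$ and $\beta\in M_Y$ we have $\langle q^*\beta,\xi\rangle=\langle\beta,q_*\xi\rangle$. If $\xi\in\mathfrak t^+_{X,\RR}$, then positivity on $q^*\Gamma_Y\subset\Gamma_X$, given by Lemma~\ref{lem:weight-multiple-claim}(b), shows $q_*\xi\in\mathfrak t^+_{Y,\RR}$. Conversely, if $q_*\xi$ is a Reeb vector and $0\ne\alpha\in\Gamma_X$, then Lemma~\ref{lem:weight-multiple-claim}(c) gives $d\alpha=q^*\beta$ with $\beta\in\Gamma_Y$, and $\beta\neq0$ since $q^*$ is injective. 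Hence $\langle\alpha,\xi\rangle=\tfrac1d\langle\beta,q_*\xi\rangle>0$. The main obstacle is the first step, producing an invariant open set over which $f$ is étale; the purity argument via the crepant formula is the approach to try first.
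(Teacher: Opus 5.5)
Your proposal is correct and follows the paper's proof essentially step by step. It uses the same invariant open set $U=Y_{\mathrm{reg}}\setminus\mathrm{Supp}(\Delta_Y)$ obtained from purity of the branch locus, Lemma~\ref{lem:lift-2} for the effective lifted action, Lemma~\ref{lem:weight-multiple-claim}(d) for the vertex, invariance of $\Delta_X=f^*\Delta_Y-R$ (the paper phrases this via the coefficient formula with translation-invariant ramification indices), and the same pairing argument with Lemma~\ref{lem:weight-multiple-claim}(b),(c) for part (c). You should delete the abandoned detours in your first paragraph, in particular the claim that connectedness of the torus alone makes the branch locus invariant, which is not valid before equivariance is known; your final argument does not use them.
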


\begin{proof}
By the assumption and purity of branch locus, we know that $f$ is \'etale over a nonempty open subset $U\coloneqq Y_{\mathrm{reg}}\setminus \mathrm{Supp}(\Delta_Y)$. Moreover, since $\Delta_Y$ is $\TT_Y$-invariant, we see that $U$ is $\TT_Y$-invariant as well. Then Lemma~\ref{lem:lift-2} gives a torus $\TT_X$ that acts effectively on $X$ with an isogeny $q\colon \TT_X\to \TT_Y$ such that $f$ is $q$-equivariant. By Lemma~\ref{lem:weight-multiple-claim}(d), we know that the action of $\TT_X$ on $X$ is good with the vertex $x_0=(f^{-1}(y_0))_{\mathrm{red}}$. Then $(X, \Delta_{X}; \TT_X)$ is a log Fano cone and part (b) follows once we know that $\Delta_X$ is $\TT_X$-invariant.

If $E\subset X$ is a prime divisor and $D\coloneqq f(E)$, then from the assumption $K_X+\Delta_X=f^*(K_Y+\Delta_Y)$, we have 
\begin{equation}
\label{eq:coefficient-boundary-under-finite-map}
 \coeff_E(\Delta_X)
 =e_{E/D}\coeff_D(\Delta_Y)-(e_{E/D}-1),
\end{equation}
where $e_{E/D}$ is the ramification index of $E$ over $D$. For any $\lambda\in \TT_X$, $e_{E/D}=e_{\lambda\cdot E/q(\lambda)\cdot D}$. Since $\Delta_Y$ is $\TT_Y$-invariant, we have $\coeff_D(\Delta_Y)=\coeff_{q(\lambda)\cdot D}(\Delta_Y)$. Then by \eqref{eq:coefficient-boundary-under-finite-map}, we get 
\[\coeff_{\lambda\cdot E}(\Delta_X)=e_{\lambda\cdot E/q(\lambda)\cdot D}\coeff_{q(\lambda)\cdot D}(\Delta_Y)-(e_{\lambda\cdot E/q(\lambda)\cdot D}-1)=\coeff_E(\Delta_X).\]
This proves the $\TT_X$-invariance of $\Delta_X$.


It remains to prove part (c). By \eqref{eq:induce-iso}, composition with $q$ gives a linear isomorphism
\[q_*\colon \mathfrak{t}_{X,\RR}\longrightarrow \mathfrak{t}_{Y,\RR},\]
which is characterized by the pairings:
\begin{equation}
\label{eq:pairing-compatibility-q}
 \langle q^*\alpha,\xi\rangle
 =\langle\alpha,q_*\xi\rangle
\end{equation}
for any $\alpha\in M_Y$ and $\xi\in\mathfrak t_{X,\mathbb R}$. Let $\xi\in \mathfrak t^+_{X,\mathbb R}$. If
$\alpha\in\Gamma_Y\setminus\{0\}$, then
$q^*\alpha\in\Gamma_X\setminus\{0\}$ by Lemma~\ref{lem:weight-multiple-claim}(b), which gives $\langle\alpha,q_*\xi\rangle
 =\langle q^*\alpha,\xi\rangle>0$. Therefore, we get $q_*(\mathfrak t^+_{X,\mathbb R})\subseteq\mathfrak t_{Y,\RR}^+.$ 
 
Conversely, let $\eta\in\mathfrak t_{Y,\RR}^+$ and set $\xi\coloneqq q_*^{-1}(\eta).$ For any $\beta\in\Gamma_X\setminus\{0\}$,
Lemma~\ref{lem:weight-multiple-claim}(c) gives an integer $m>0$ and $\alpha\in\Gamma_Y\setminus\{0\}$ such that $m\beta=q^*\alpha$. Hence
\[m\langle\beta,\xi\rangle
 =\langle q^*\alpha,\xi\rangle
 =\langle\alpha,q_*\xi\rangle
 =\langle\alpha,\eta\rangle
 >0.\]
Therefore, we get $\xi\in\mathfrak t^+_{X,\mathbb R}$ and  the inclusion $\mathfrak t_{Y,\RR}^+\subseteq q_*(\mathfrak t^+_{X,\mathbb R})$ follows. Combining the above two inclusions gives $q_*(\mathfrak t^+_{X,\mathbb R})=\mathfrak t_{Y,\RR}^+$, which finishes the proof.
\end{proof}

\section{Local K-stability under finite morphisms}

In this section, we prove the following result, which will be used in the next section to analyze normalized volumes of central fibers in stable degenerations.

\begin{theorem}\label{thm:Kss}
Let $(X,\Delta_X; \TT_X)$ and $(Y, \Delta_Y; \TT_Y)$ be log Fano cones with an isogeny $q\colon \TT_X\to \TT_Y$. Let $f\colon (X,\Delta_X)\to (Y, \Delta_Y)$ be a finite surjective $q$-equivariant morphism satisfying $K_X+\Delta_X=f^*(K_Y+\Delta_Y)$. If $\xi_X\in \mathfrak{t}^+_{X,\RR}$ and $\xi_Y\in \mathfrak{t}^+_{Y,\RR}$ are Reeb vectors such that $q_*(\xi_X)=\xi_Y$, then $(X,\Delta_X; \TT_X, \xi_X)$ is K-semistable if and only if $(Y,\Delta_Y; \TT_Y, \xi_Y)$ is K-semistable.
\end{theorem}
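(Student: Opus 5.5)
The plan is to reduce to the quasi-regular case, where \cite[Theorem~1.2(1)]{liu-zhu:equivariant} applies, and then pass to arbitrary Reeb vectors by approximation, following \cite{xu-zhuang:kaledin}.

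\textbf{Step 1: the quasi-regular case.} Suppose $\xi_Y$ is quasi-regular. By Lemma~\ref{lem:weight-multiple-claim}(a), some positive multiple of $\xi_X$ lies in $N_X$ exactly when some positive multiple of $\xi_Y=q_*\xi_X$ lies in $N_Y$, so $\xi_X$ is quasi-regular as well.

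Consider the Seifert quotients $E(\xi_X)$ and $E(\xi_Y)$ from Section~\ref{subsec:cone}. Since $f$ is $q$-equivariant and the one-parameter subgroups correspond under $q$ up to a finite kernel, $f$ descends to a finite surjective morphism $\bar f\colon E(\xi_X)\to E(\xi_Y)$. Using \eqref{eq:def-deltaE} and $K_X+\Delta_X=f^*(K_Y+\Delta_Y)$, I expect $\bar f$ to be crepant with respect to $\Delta_{E(\xi_X)}$ and $\Delta_{E(\xi_Y)}$. One caveat: if the Seifert structure has multiple fibres, the boundary divisors already absorb these, and crepancy of $\bar f$ should be checked via the orbifold ramification formula.

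K-semistability of the cone with quasi-regular Reeb vector is equivalent to K-semistability of the log Fano quotient $(E(\xi),\Delta_{E(\xi)})$, by \cite{li-xu:higher-rank,li-wang-xu:alg-cone}. So it suffices to show that $(E(\xi_X),\Delta_{E(\xi_X)})$ is K-semistable if and only if $(E(\xi_Y),\Delta_{E(\xi_Y)})$ is. This is precisely \cite[Theorem~1.2(1)]{liu-zhu:equivariant} for finite crepant covers of log Fano pairs.

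\textbf{Step 2: an auxiliary torus-invariant invariant.} Consider the function $\xi\mapsto \hvol_{(X,\Delta_X)}(\xi)$ on $\mathfrak{t}^+_{X,\RR}$. Two facts are needed.

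\emph{Volume scales by the degree.} I expect $\vol_{X,x_0}(\xi)=\deg(f)\,\vol_{Y,y_0}(q_*\xi)$. To see this, view $\cO(X)$ as a torsion-free graded $\cO(Y)$-module of rank $\deg(f)$. The Hilbert series of $\mathrm{wt}_\xi$ and $\mathrm{wt}_{q_*\xi}$ then have leading asymptotics differing by this rank.

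\emph{Log discrepancy is preserved.} I expect $A_{X,\Delta_X}(\xi)=A_{Y,\Delta_Y}(q_*\xi)$. This follows because the log discrepancy is linear on the Reeb cone and agrees on quasi-regular (divisorial) vectors, by the crepant pullback formula. The ramification index along $E(\xi)$ equals the index of the lattice map along $\xi$, and this cancels against the scaling in $\mathrm{wt}_\xi=c\,\ord_{E(\xi)}$.

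Hence $\hvol_{(X,\Delta_X)}(\xi)=\deg(f)\cdot\hvol_{(Y,\Delta_Y)}(q_*\xi)$ for every Reeb vector $\xi$, and $q_*$ identifies the Reeb cones by Proposition~\ref{prop:lift-cone}(c).

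\textbf{Step 3: the general case by approximation.} Suppose $(Y,\Delta_Y;\TT_Y,\xi_Y)$ is K-semistable. Then \cite{xu-zhuang:kaledin} provides a special test configuration degenerating $Y$ to a K-semistable cone with quasi-regular Reeb vector and the same normalized volume. Alternatively, and more simply, the following holds: K-semistability of $\xi_Y$ is equivalent to $\xi_Y$ minimizing $\hvol$ over $\mathfrak{t}^+_{Y,\RR}$, together with K-semistability of the degenerations induced by nearby quasi-regular $\xi'$. Those degenerations lift by Proposition~\ref{prop:pull-back-weak} and Proposition~\ref{prop:lift-cone}, and Step 1 transfers K-semistability across them. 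The identity from Step 2 transfers minimality on the Reeb cone. The converse direction is symmetric.

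\textbf{Main obstacle.} The hard part is Step 3: making the approximation rigorous. Concretely, one must verify that the Xu--Zhuang reduction of irregular K-semistability to the quasi-regular case is compatible with lifting along $f$. This requires checking that the lifted central fiber carries the lifted torus and Reeb vector and remains finite crepant over the central fiber on the $Y$ side. If that fails, I would instead test K-semistability via the Futaki invariant $\hvol$-derivative along every $\TT$-equivariant special test configuration. These correspond bijectively under pullback and normalization, and the derivative scales by $\deg(f)$ by Step 2.
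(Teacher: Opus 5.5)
Your Steps 1 and 2 are fine; Step 2 is essentially Lemma~\ref{lem:vol-cone}. Step 3, however, carries all the difficulty when $\xi_Y$ is irregular, and it has a genuine gap.

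The reduction you first propose is not available. A K-semistable cone with quasi-regular Reeb vector has rational normalized volume, while irregular K-semistable cones can have irrational normalized volume. So in general no $\hvol$-preserving degeneration produces a quasi-regular K-semistable cone.

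The ``equivalence'' you then propose is false. If $\xi_Y$ is irregular and K-semistable, then for quasi-regular $\xi'$ near $\xi_Y$ the pair $(E(\xi'),\Delta_{E(\xi')})$ is \emph{never} K-semistable. Otherwise $\mathrm{wt}_{\xi'}$ would be a second minimizer of $\hvol$, contradicting uniqueness up to scaling. (Also, the degeneration of $Y$ induced by the $\TT_Y$-invariant valuation $\mathrm{wt}_{\xi'}$ is trivial.) Hence an exact transfer of K-semistability in the quasi-regular case, i.e.\ \cite[Theorem~1.2(1)]{liu-zhu:equivariant}, gives you nothing to approximate with. What survives near $\xi_Y$ is only the approximate bound $\delta(E(\xi'),\Delta_{E(\xi')})\geq 1-\epsilon$ of Lemma~\ref{lem:xz-219}.

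Your fallback does not close the gap either. Pulling back $\TT_Y$-equivariant special test configurations gives ones on $X$ with proportional Futaki invariants. But not every $\TT_X$-equivariant special test configuration of $X$ arises this way, so there is no bijection. Pullback only yields ``$X$ K-semistable $\Rightarrow$ $Y$ K-semistable''. That direction is anyway immediate from Lemma~\ref{lem:vol-cone} and the elementary inequality $\hvol(x_0,X,\Delta_X)\leq\deg(f)\cdot\hvol(y_0,Y,\Delta_Y)$. The converse is the direction used in the proof of Theorem~\ref{thm:main}, it is exactly what remains unproved, and it is where the analytic input is indispensable.

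The missing idea is to transfer quantitative information, namely $\delta$-invariants below $1$, instead of K-semistability itself. The paper argues by contradiction, as follows.
\begin{enumerate}
\item Suppose $(X,\Delta_X;\TT_X,\xi_X)$ is not K-semistable. Theorem~\ref{thm:xz-216} gives a $\TT_X$-invariant quasi-monomial $v$ with $A_{X,\Delta_X}(v)\leq(1-2\epsilon)S(\xi_X;v)$.
\item By continuity of $\xi\mapsto S(\xi;v)$ (Lemma~\ref{lem:continue-S}), $A_{X,\Delta_X}(v)<(1-\epsilon)S(\xi;v)$ on a neighborhood $U$ of $\xi_X$.
\item Lemma~\ref{lem:xz-218} then gives $\delta(E(\xi),\Delta_{E(\xi)})<1-\epsilon$ for every quasi-regular $\xi\in U$.
\item Lemma~\ref{lem:crepant-kollar} provides finite crepant morphisms $E(\xi)\to E(q_*\xi)$. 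Combined with the $\delta$-invariant statement \cite[Theorem~1.2(3)]{liu-zhu:equivariant} (not merely part (1)), this gives $\delta(E(\xi'),\Delta_{E(\xi')})<1-\epsilon$ for all quasi-regular $\xi'\in q_*(U)$.
\item Since quasi-regular vectors are dense, this contradicts Lemma~\ref{lem:xz-219} at $\xi_Y$.
\end{enumerate}
The other implication is symmetric.
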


In the global setting, the corresponding result for log Fano pairs is proved in \cite{liu-zhu:equivariant}. To prove this theorem, we will approximate $\xi_X$ and $\xi_Y$ by nearby quasi-regular Reeb vectors, and then apply \cite[Theorem~1.2(3)]{liu-zhu:equivariant} to the corresponding Koll\'ar components.

To relate the local stability to the global stability of Koll\'ar components, we need the following invariant in the local setting.

\begin{definition}
Let $(X,\Delta_X; \TT_X)$ be a log Fano cone with the vertex $x_0$ and write $R\coloneqq \cO(X)$. Fix $\xi \in \mathfrak{t}^+_{X,\RR}$ and $v\in \Val_{X,x_0}^{\TT_X}$ with $A_{X,\Delta_X}(v)<+\infty$. For any $m\in \RR_{>0}$, we define 
\[\wt{S}_m(\xi;v)\coloneqq \sum_{\substack{\lambda\in \RR_{\geq 0}\\ \alpha\in \Gamma_X\\ \langle \alpha,\xi\rangle <m}} \lambda\cdot \dim_{\CC} \mathrm{gr}^{\lambda}_v R_{\alpha},\]
where $$\mathrm{gr}_v(R)=\bigoplus_{\substack{ \lambda\in \RR_{\geq 0}\\ \alpha\in \Gamma_X}}\mathrm{gr}^{\lambda}_v R_{\alpha}$$
is the induced weight decomposition on $\mathrm{gr}_v(R)$ (cf.~\cite[Section~3.1]{xu-zhuang:unique}).

We then define
\[\wt{S}(\xi;v)\coloneqq \lim_{m\to +\infty} \frac{(\dim X+1)!}{m^{\dim X+1}}\wt{S}_m(\xi;v)\]
and
\[S(\xi;v)\coloneqq \frac{A_{X,\Delta_X}(\xi)}{\wt{S}(\xi;\mathrm{wt}_{\xi})}\wt{S}(\xi;v).\]
\end{definition}

Note that $\wt{S}(\xi;v)$ is well-defined and positive by \cite{xu-zhuang:unique}. By definition, we have
\begin{equation}\label{eq:S-homogeneity}
 \wt{S}(c\xi;v)=c^{-(\dim X+1)}\wt{S}(\xi;v),
 \qquad
 S(c\xi;v)=S(\xi;v)
\end{equation}
for any $c>0$.

The invariant $S(\xi;v)$ characterizes the K-stability of a log Fano cone.

\begin{theorem}\label{thm:xz-216}
Let $(X,\Delta_X; \TT_X, \xi)$ be a polarized log Fano cone with the vertex $x_0$. Then it is K-semistable if and only if
\[A_{X,\Delta_X}(v)\geq S(\xi; v)\]
for all quasi-monomial $v\in \Val_{X,x_0}^{\TT_X}$.
\end{theorem}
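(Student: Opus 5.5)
The plan is to read both sides of the equivalence as statements about the normalized volume function restricted to $\TT_X$-invariant valuations. The inequality $A_{X,\Delta_X}(v)\geq S(\xi;v)$ should be exactly the sign of the first derivative of $\hvol_{(X,\Delta_X)}$ at $\mathrm{wt}_\xi$ in the direction of $v$, and convexity of $\hvol$ along such directions should turn first-order information into global minimality. This is essentially the framework of \cite{xu-zhuang:unique,xu-zhuang:kaledin} and \cite{li-xu:higher-rank}, and I would follow it.

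\emph{Step 1: reduction to invariant quasi-monomial valuations.} A minimizer of $\hvol_{(X,\Delta_X)}$ on $\Val_{X,x_0}$ exists \cite{blum:minimizer}, is unique up to scaling \cite{xu-zhuang:unique,blum-liu-qi:convex}, and is quasi-monomial \cite{xu:quasi-monomial}. By uniqueness it is fixed by the connected group $\TT_X$, so it lies in $\Val^{\TT_X}_{X,x_0}$. Hence $(X,\Delta_X;\TT_X,\xi)$ is K-semistable if and only if $\hvol(\mathrm{wt}_\xi)\leq \hvol(v)$ for every $\TT_X$-invariant quasi-monomial $v\in\Val_{X,x_0}$.

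\emph{Step 2: the derivative formula.} Fix such a $v$. Because $v$ is $\TT_X$-invariant, $\mathrm{gr}_v(R)$ carries the joint grading by $(\lambda,\alpha)$ used in the definition of $\wt{S}_m$. For small $s\geq 0$, define the valuation $w_s$ on $R$ as the weight valuation attached to the grading $\langle\alpha,\xi\rangle+s\lambda$, in the sense of \cite[Section~3.1]{xu-zhuang:unique}. This is the path from $\mathrm{wt}_\xi$ towards $v$. On the degeneration $\Spec\mathrm{gr}_v(R)$, the valuation $w_s$ becomes the Reeb vector $\xi+s\eta$, where $\eta$ is the weight vector induced by $v$. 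This gives two facts:
\begin{itemize}
\item the log discrepancy is linear in $s$: $A(w_s)=A(\xi)+sA(v)$;
\item $\vol(w_s)$ is computed by a lattice-point count over $\{\langle\alpha,\xi\rangle+s\lambda<m\}$.
\end{itemize}
Differentiating this count at $s=0$ should give
\[\frac{d}{ds}\Big|_{s=0}\vol(w_s)=-n\cdot\vol(\xi)\cdot\frac{\wt S(\xi;v)}{\wt S(\xi;\mathrm{wt}_\xi)}\cdot c_n,\]
with $n=\dim X$ and $c_n$ a normalizing constant. Here $c_n$ is fixed by applying the same formula to $v=\mathrm{wt}_\xi$ and using the homogeneity \eqref{eq:S-homogeneity} together with Euler's relation $\frac{d}{ds}\vol((1+s)\xi)=-n\vol(\xi)$. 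Consequently
\[\frac{d}{ds}\Big|_{s=0}\hvol(w_s)=n\,A(\xi)^{n-1}\vol(\xi)\bigl(A_{X,\Delta_X}(v)-S(\xi;v)\bigr).\]

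\emph{Step 3: convexity.} Along the path $w_s$, the function $s\mapsto\hvol(w_s)$ is convex. This follows from the convexity results of \cite{xu-zhuang:unique,blum-liu-qi:convex}, or directly, since $\vol^{-1/n}$ of the Reeb vector $\xi+s\eta$ on the degenerate cone is concave while $A$ is linear. Necessity then follows: if $\mathrm{wt}_\xi$ minimizes, the right derivative at $s=0$ is $\geq 0$ for every $v$, which gives $A\geq S$. For sufficiency, take $v$ to be the ($\TT_X$-invariant, quasi-monomial) minimizer from Step 1. The path $w_s$ joins $\mathrm{wt}_\xi$ to valuations approaching a multiple of $v$, and convexity together with a nonnegative initial derivative forces $\hvol(\mathrm{wt}_\xi)\leq\hvol(v)$, so $\mathrm{wt}_\xi$ is a minimizer.

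The main obstacle is Step 3, and specifically the sufficiency direction. One must check that the segment $w_s$ genuinely reaches (a scalar multiple of) the minimizer and that $\hvol$ is convex along it. When $v$ has higher rational rank, this requires working on $\mathrm{gr}_v(R)$ with a larger torus and possibly approximating $v$ by divisorial valuations, as in \cite{li-xu:higher-rank,xu-zhuang:kaledin}. I would first prove the quasi-regular/divisorial case and then pass to the limit using continuity of $A$, $\vol$, and $\wt S$ in $v$ on a fixed quasi-monomial simplex.
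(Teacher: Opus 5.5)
Your route differs from the paper's. The paper gives no argument here. It chains \cite[Theorem~3.10]{xu-zhuang:unique}, the valuative criterion for K-semistability of log Fano cones in the test-configuration sense, with \cite[Theorem~1.3]{li-xu:higher-rank}, which says that notion is equivalent to $\mathrm{wt}_\xi$ minimizing $\hvol$. You instead go directly from volume minimization to $A_{X,\Delta_X}(v)\geq S(\xi;v)$ by differentiating $\hvol$ along $w_s$. This avoids test configurations entirely, but the ``if'' direction relies on the minimizer existing, being $\TT_X$-invariant, and being quasi-monomial. Steps 1 and 2 are sound. The bigraded count needs no finite generation, and it gives $\frac{d}{ds}\big|_{s=0}\vol(w_s)=-\wt S(\xi;v)$, so $c_n=1$.

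The gap is in Step 3: $s\mapsto\hvol(w_s)$ is \emph{not} convex in general. Take $X=\bA^2$, $\TT_X=\Gm^2$, $\xi=(1,1)$ and $v=\mathrm{wt}_{(1,2)}$. Then $w_s=\mathrm{wt}_{(1+s,1+2s)}$ and $\hvol(w_s)=4+\frac{s^2}{(1+s)(1+2s)}$. This increases strictly to the finite limit $9/2$, so it cannot be convex on $[0,\infty)$.

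Concavity of $\vol^{-1/n}$ together with linearity of $A$ does not give convexity of $\hvol$, but it gives what you need. Set $g(s)\coloneqq\vol(w_s)^{-1/n}$ and $r_0\coloneqq\hvol(\mathrm{wt}_\xi)^{1/n}$. Then $\phi(s)\coloneqq A(\xi)+sA(v)-r_0\,g(s)$ is convex, with $\phi(0)=0$ and $\phi'(0^+)=A(v)-S(\xi;v)$. For $v=v^*$ the hypothesis therefore gives $\phi\geq 0$ on $[0,\infty)$. Dividing by $s$ and using $s^n\vol(w_s)\to\vol(v^*)$ yields $\hvol(v^*)\geq\hvol(\mathrm{wt}_\xi)$; note this step never uses $A(w_s)$.

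Do not argue the concavity of $g$ on $\Spec\mathrm{gr}_v(R)$. Finite generation fails for general quasi-monomial $v$, and for $v^*$ it is the deep stable-degeneration theorem. Instead, let $\rho$ be the degree-$n$ homogeneous limit measure of the bigrading $(\langle\alpha,\xi\rangle,\lambda)$. A polar decomposition gives $\vol(w_s)=\int e^{-(a+sb)}\,d\rho=(n-1)!\int(a_\theta+sb_\theta)^{-n}\,d\mu(\theta)$. This is an $L^{-n}$-mean of functions affine in $s$, so $g$ is concave by the reverse Minkowski inequality.

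Similarly, the ``only if'' direction needs $A(w_s)\leq A(\xi)+sA(v)$. Justify this by additivity of $A$ under twisting the $\TT_X$-invariant valuation $sv$ by $\xi$. It should not rest on a Reeb vector $\xi+s\eta$ on a degeneration that may not be finitely generated.
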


\begin{proof}
This is a combination of \cite[Theorem~3.10]{xu-zhuang:unique} and \cite[Theorem~1.3]{li-xu:higher-rank}.
\end{proof}

For a log Fano pair $(E, \Delta_E)$, its K-semistability is controlled by the $\delta$-invariant $\delta(E,\Delta_E)$ (cf.~\cite{fujita,blum-jonsson:threshold}). The relation between $S(\xi;v)$ and the $\delta$-invariant of Koll\'ar components near $\xi$ is recorded in the next two lemmas.

\begin{lemma}[{\cite[Lemma~2.18]{xu-zhuang:kaledin}}]\label{lem:xz-218}
Let $(X,\Delta_X; \TT_X)$ be a log Fano cone with the vertex $x_0$ and let $\xi \in \mathfrak{t}^+_{X,\RR}$ be a quasi-regular Reeb vector. Then for any $\epsilon\geq 0$, $$\delta(E(\xi), \Delta_{E(\xi)})\geq 1-\epsilon$$ if and only if
\[A_{X,\Delta_X}(v)\geq (1-\epsilon)S(\xi; v)\]
for all quasi-monomial $v\in \Val_{X,x_0}^{\TT_X}$.
\end{lemma}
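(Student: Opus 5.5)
This lemma is cited from \cite[Lemma~2.18]{xu-zhuang:kaledin}; the plan is to compare, for every $\TT_X$-invariant valuation on the cone, both sides of the inequality with the corresponding invariants on the Koll\'ar component $E\coloneqq E(\xi)$.

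\emph{Normalization.} By the homogeneity \eqref{eq:S-homogeneity}, we may rescale $\xi$ so that $\xi\in N_X$ is primitive. Then $\mathrm{wt}_\xi=\mathrm{ord}_{E}$ up to a fixed constant, the grading $R=\bigoplus_{k\ge0}R_k$ by $\langle\alpha,\xi\rangle$ is a $\ZZ$-grading, and $X=\Spec\bigoplus_k H^0(E,kL)$ for an ample $\QQ$-Cartier Weil divisor $L$ on $E$ (Seifert $\Gm$-bundle structure, \cite[Section~4]{kollar:seifert-bundle}).

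\emph{Identifying the cone invariants.}
\begin{enumerate}
\item By \eqref{eq:def-deltaE} and the usual adjunction for cones, $-(K_{E}+\Delta_{E})\sim_\QQ A_{X,\Delta_X}(\xi)\,L$.
\item Every $\TT_X$-invariant quasi-monomial valuation $v$ is a combination of a quasi-monomial valuation $w$ on $E$ (possibly trivial) and a multiple of $\mathrm{wt}_\xi$, say $v=v_{w,s}$, with $v(r_k)=w(r_k)+sk$ for $r_k\in R_k$.
\item The standard formula then gives $A_{X,\Delta_X}(v_{w,s})=A_{E,\Delta_E}(w)+s\,A_{X,\Delta_X}(\xi)$.
\item Since $\mathrm{gr}_v R_k$ is the filtration on $H^0(E,kL)$ induced by $w$, shifted by $sk$, we get $\wt S_m$ as a sum over $k<m$ of $k\cdot\dim R_k\cdot S_{kL}(w)+sk^2\dim R_k$, up to lower order terms.
\end{enumerate}

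\emph{Computing $S(\xi;v)$.} Passing to the limit and normalizing by $\wt S(\xi;\mathrm{wt}_\xi)$ (the case $w$ trivial, $s=1$) should give
\[S(\xi;v_{w,s})=A_{X,\Delta_X}(\xi)\bigl(S_L(w)+s\bigr)=S_{-(K_E+\Delta_E)}(w)+s\,A_{X,\Delta_X}(\xi).\]
The second equality uses the scaling of $S$ with respect to the polarization together with item 1.

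\emph{Conclusion.} The inequality $A_{X,\Delta_X}(v)\ge(1-\epsilon)S(\xi;v)$ becomes
\[A_{E,\Delta_E}(w)+sA_{X,\Delta_X}(\xi)\ge(1-\epsilon)\bigl(S_{E,\Delta_E}(w)+sA_{X,\Delta_X}(\xi)\bigr).\]
The $s$-terms satisfy this automatically, since $\epsilon\ge0$. Hence the condition for all $v$ is equivalent to $A_{E,\Delta_E}(w)\ge(1-\epsilon)S(w)$ for all quasi-monomial $w$ on $E$. By \cite{blum-jonsson:threshold}, the latter is exactly $\delta(E,\Delta_E)\ge1-\epsilon$.

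\emph{Main obstacle.} The delicate step is item 2 together with item 4. One must show that every $\TT_X$-invariant quasi-monomial valuation arises as $v_{w,s}$ with $w$ quasi-monomial on $E$, and must control the asymptotics of the filtration. This has to be done carefully when $E$ has orbifold structure, where $L$ is only $\QQ$-Cartier and $\Delta_E$ includes branch divisors. I would handle this by working on the orbifold line bundle, or by passing to a Veronese subring $R^{(d)}$, which changes neither side of the inequality.
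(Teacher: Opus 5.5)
The paper gives no argument here; it imports the statement from \cite[Lemma~2.18]{xu-zhuang:kaledin}. Your dictionary between $\chi_\xi(\Gm)$-invariant valuations on the cone and valuations on $E=E(\xi)$ is the natural route. In particular, the formulas $A_{X,\Delta_X}(v_{w,s})=A_{E,\Delta_E}(w)+s\,A_{X,\Delta_X}(\xi)$ and $S(\xi;v_{w,s})=S_{E,\Delta_E}(w)+s\,A_{X,\Delta_X}(\xi)$ are correct.

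The gap is in your final step. The lemma only concerns $\TT_X$-invariant $v$ (and $S(\xi;v)$ is only defined for those). But $v_{w,s}$ is $\TT_X$-invariant exactly when $w$ is invariant under the residual torus $\TT_E\coloneqq \TT_X/\chi_\xi(\Gm)$ acting on $E$. So your computation shows that the cone condition is equivalent to $A_{E,\Delta_E}(w)\geq (1-\epsilon)S_{E,\Delta_E}(w)$ for all \emph{$\TT_E$-invariant} quasi-monomial $w$. That is a bound on the equivariant $\delta$-invariant, not on $\delta(E,\Delta_E)$. This suffices for the implication that $\delta(E,\Delta_E)\geq 1-\epsilon$ gives the cone inequality. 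The converse, however, requires passing from $\TT_E$-invariant valuations to all valuations on $E$, and \cite{blum-jonsson:threshold} does not provide that. As soon as $\dim \TT_X\geq 2$, the torus $\TT_E$ acts nontrivially on $E$, and this step has real content.

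To close the gap you need an equivariance input: for a log Fano pair with a torus action, the condition $\delta\geq c$ with $c\leq 1$ may be tested on torus-invariant valuations. For $\epsilon=0$ this is the theorem that equivariant K-semistability implies K-semistability (cf.\ \cite{zhuang:optimal}). For $\epsilon>0$ you need the corresponding statement for $\delta$-invariants below $1$. Alternatively, you could give a degeneration argument that replaces a non-invariant $w$ by a $\TT_E$-invariant one without increasing $A/S$. This equivariance step, rather than your items 2 and 4, is the substantive point beyond the cone-to-base dictionary.

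Two smaller slips:
\begin{enumerate}
\item In item 4 the $k$-th summand should be $k\dim R_k\, S_k(L;w)+sk\dim R_k$ with $S_k(L;w)\to S_L(w)$. You have an extra factor of $k$, which is incompatible with the normalization by $m^{\dim X+1}$. With this correction your formula for $S(\xi;v_{w,s})$ follows.
\item For the implication from the cone inequality to the inequality on $E$, note that $v_{w,s}$ is centered at $x_0$ for every $s>0$, and then let $s\to 0^+$. The term $\epsilon s A_{X,\Delta_X}(\xi)$ only helps the inequality, so it is harmless in the limit.
\end{enumerate}
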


\begin{lemma}[{\cite[Lemma~2.19]{xu-zhuang:kaledin}}]\label{lem:xz-219}
Let $(X,\Delta_X; \TT_X, \xi)$ be a K-semistable log Fano cone. Then for any $\epsilon>0$, there exists an open neighborhood $U\subset \mathfrak{t}^+_{X,\RR}$ of $\xi$ such that for any quasi-regular $\xi'\in U$, we have
\[\delta(E(\xi'), \Delta_{E(\xi')})\geq 1-\epsilon.\]
\end{lemma}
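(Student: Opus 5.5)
The plan is to combine the criterion of Theorem~\ref{thm:xz-216} at $\xi$ with the criterion of Lemma~\ref{lem:xz-218} at a nearby quasi-regular $\xi'$. Everything then comes down to a continuity estimate for $S(\xi';v)$ in $\xi'$ that is \emph{uniform in} $v$.

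By Theorem~\ref{thm:xz-216}, K-semistability of $(X,\Delta_X;\TT_X,\xi)$ gives $A_{X,\Delta_X}(v)\geq S(\xi;v)$ for every quasi-monomial $v\in\Val^{\TT_X}_{X,x_0}$. By Lemma~\ref{lem:xz-218}, it suffices to show the following. Given $\epsilon>0$, there is a neighborhood $U$ of $\xi$ such that
\[S(\xi';v)\leq (1-\epsilon)^{-1}S(\xi;v)\]
for all $\xi'\in U$ and all such $v$. Then $A_{X,\Delta_X}(v)\geq S(\xi;v)\geq(1-\epsilon)S(\xi';v)$, which is exactly what Lemma~\ref{lem:xz-218} requires.

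\emph{Step 1: comparing the pairings.} Since $\xi$ lies in the interior of the Reeb cone and $\Gamma_X$ is finitely generated, $\langle\alpha,\xi\rangle$ is bounded below by a positive multiple of $|\alpha|$ on the cone spanned by $\Gamma_X$. The first step is to deduce that for every $\delta>0$ there is a neighborhood $U$ of $\xi$ with
\[(1-\delta)\langle\alpha,\xi\rangle\leq\langle\alpha,\xi'\rangle\leq(1+\delta)\langle\alpha,\xi\rangle\]
for all $\alpha\in\Gamma_X$ and $\xi'\in U$. This follows from compactness of a slice of that cone.

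\emph{Step 2: sandwiching the index sets.} Step 1 gives the inclusions
\[\{\alpha:\langle\alpha,\xi\rangle<m/(1+\delta)\}\subset\{\alpha:\langle\alpha,\xi'\rangle<m\}\subset\{\alpha:\langle\alpha,\xi\rangle<m/(1-\delta)\}.\]
All terms of $\wt S_m(\xi';v)$ are nonnegative, because $\lambda\geq0$. Hence
\[\wt S_{m/(1+\delta)}(\xi;v)\leq\wt S_m(\xi';v)\leq\wt S_{m/(1-\delta)}(\xi;v).\]
Passing to the limit, $\wt S(\xi';v)$ lies between $(1+\delta)^{-(n+1)}\wt S(\xi;v)$ and $(1-\delta)^{-(n+1)}\wt S(\xi;v)$, where $n=\dim X$. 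These bounds hold uniformly in $v$.

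\emph{Step 3: the normalizing factor.} It remains to control $A_{X,\Delta_X}(\xi')/\wt S(\xi';\mathrm{wt}_{\xi'})$, which is a single function of $\xi'$ with no $v$ involved. The function $A_{X,\Delta_X}(\xi')$ is linear, hence continuous, on the Reeb cone. For the other factor, note that $\mathrm{gr}_{\mathrm{wt}_{\xi'}}R_\alpha=R_\alpha$ sits in the single degree $\langle\alpha,\xi'\rangle$. Therefore
\[\wt S_m(\xi';\mathrm{wt}_{\xi'})=\sum_{\langle\alpha,\xi'\rangle<m}\langle\alpha,\xi'\rangle\dim R_\alpha.\]
The same sandwich as in Step 2, now also bounding the weight $\langle\alpha,\xi'\rangle$ by $(1\pm\delta)\langle\alpha,\xi\rangle$, gives
\[(1+\delta)^{-(n+2)}\,\wt S(\xi;\mathrm{wt}_\xi)\leq\wt S(\xi';\mathrm{wt}_{\xi'})\leq(1-\delta)^{-(n+2)}\,\wt S(\xi;\mathrm{wt}_\xi).\]

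\emph{Step 4: conclusion.} Combining Steps 2 and 3, $S(\xi';v)\leq C(\delta)\,S(\xi;v)$ with $C(\delta)\to1$ as $\delta\to0$, uniformly in $v$. Choosing $\delta$ so that $C(\delta)\leq(1-\epsilon)^{-1}$ and shrinking $U$ accordingly proves the lemma.

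The main obstacle is the uniformity in $v$. Step 2 handles it, and it works only because every graded piece contributes with a nonnegative weight $\lambda$, so enlarging the index set can only increase the sum. The limit defining $\wt S$ also has to be taken along the rescaled parameters $m/(1\pm\delta)$; this is harmless because that limit exists by the result of \cite{xu-zhuang:unique} quoted above.
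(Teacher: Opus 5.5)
Your argument is correct and is essentially the intended one: the paper quotes this lemma from \cite{xu-zhuang:kaledin} without proof, but its proof of Lemma~\ref{lem:continue-S} (which it describes as similar to the proof of \cite[Lemma~2.19]{xu-zhuang:kaledin}) uses the same compact-slice comparison and sandwich of index sets. Your observation that the resulting constants do not depend on $v$ is exactly what turns that continuity into the uniform bound needed to pass from Theorem~\ref{thm:xz-216} at $\xi$ to Lemma~\ref{lem:xz-218} at $\xi'$.

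One harmless slip: in Step~3 the termwise sandwich actually yields the lower constant $(1-\delta)(1+\delta)^{-(n+1)}$, which is slightly weaker than the $(1+\delta)^{-(n+2)}$ you state. This does not matter, since only $C(\delta)\to 1$ is used.
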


We also need the following continuity result.

\begin{lemma}\label{lem:continue-S}
Let $(X,\Delta_X; \TT_X)$ be a log Fano cone with the vertex $x_0$. Then for any $v\in \Val^{\TT_X}_{X,x_0}$ with $A_{X,\Delta_X}(v)<+\infty$, the function
\[\xi\longmapsto S(\xi;v)\]
is continuous on $\mathfrak{t}^+_{X,\RR}$.
\end{lemma}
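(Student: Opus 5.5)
Since $S(\xi;v)=A_{X,\Delta_X}(\xi)\,\wt{S}(\xi;v)/\wt{S}(\xi;\mathrm{wt}_{\xi})$, it suffices to show three things. First, $\xi\mapsto A_{X,\Delta_X}(\mathrm{wt}_\xi)$ is continuous. Second, $\xi\mapsto \wt{S}(\xi;v)$ is continuous for each fixed $v$. Third, $\xi\mapsto\wt{S}(\xi;\mathrm{wt}_\xi)$ is continuous and positive; positivity is already known from \cite{xu-zhuang:unique}. The first point is standard: $A_{X,\Delta_X}(\mathrm{wt}_\xi)$ is a linear function of $\xi$ on $\mathfrak{t}_{X,\RR}$, since $\mathrm{wt}_\xi$ ranges over a cone of quasi-monomial valuations on a toroidal model, or one can use the formula for log discrepancies of toric valuations on $\TT_X$-varieties. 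So the work is in the $\wt{S}$ terms.

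For $\wt{S}(\xi;v)$, I would rewrite it as an integral over the weight cone. Because $v$ is $\TT_X$-invariant, the filtration by $v$ is compatible with the weight decomposition. Consider the Okounkov-type body, or the Duistermaat--Heckman measure $\mu_v$ on $M_{X,\RR}\times\RR_{\ge 0}$, defined as the weak limit of the rescaled counting measures
\[\frac{1}{k^{\dim X}}\sum_{\alpha,\lambda}\dim\mathrm{gr}^{\lambda}_vR_\alpha\,\delta_{(\alpha/k,\lambda/k)}.\]
Then I would obtain
\[\wt{S}(\xi;v)=(\dim X+1)!\int_{\{\langle\alpha,\xi\rangle<1\}}\lambda\,d\mu_v(\alpha,\lambda),\]
in analogy with the expression of $S$-invariants for $\TT$-varieties in \cite{xu-zhuang:unique}. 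The existence of this limit measure is where the proof of well-definedness in \cite{xu-zhuang:unique} is used. The key observation is that $\mu_v$ is independent of $\xi$, and only the domain $\{\langle\alpha,\xi\rangle<1\}$ moves. Moreover, $\mu_v$ is absolutely continuous, or at least charges no hyperplane, and has support in a cone on which every Reeb vector is strictly positive away from $0$. So on a compact neighborhood of $\xi$ the domains are uniformly bounded, and the integrand is bounded there because $v\le C\,\mathrm{wt}_{\xi}$ for some $C$ by an Izumi-type comparison. Dominated convergence then gives continuity in $\xi$.

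For the third term I would avoid varying $v$. Instead, I would compute $\wt{S}(\xi;\mathrm{wt}_\xi)$ directly: $\mathrm{gr}^\lambda_{\mathrm{wt}_\xi}R_\alpha=R_\alpha$ exactly when $\lambda=\langle\alpha,\xi\rangle$. Hence
\[\wt{S}(\xi;\mathrm{wt}_\xi)=\lim_m\frac{(n+1)!}{m^{n+1}}\sum_{\langle\alpha,\xi\rangle<m}\langle\alpha,\xi\rangle\dim R_\alpha=(n+1)!\int_{\{\langle\alpha,\xi\rangle<1\}}\langle\alpha,\xi\rangle\,d\mu(\alpha),\]
where $\mu$ is the limit weight measure of $R$. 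This is again continuous by the same dominated convergence argument, and it is in fact a constant multiple of $\vol(\xi)$, which is known to be continuous on the Reeb cone. I expect the main obstacle to be making the measure $\mu_v$ rigorous and checking that it does not charge the boundary hyperplanes $\{\langle\alpha,\xi\rangle=1\}$. I would handle this via the homogeneity \eqref{eq:S-homogeneity}: the function $c\mapsto\mu_v(\{\langle\alpha,\xi\rangle<c\})$ scales like $c^{\dim X}$, so it is continuous, which rules out atoms on those level sets.
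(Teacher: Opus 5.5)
Your proposal is correct. It uses the same three-factor decomposition as the paper, and it treats $A_{X,\Delta_X}(\xi)$ (continuity/linearity on the Reeb cone) and $\wt{S}(\xi;\mathrm{wt}_\xi)=n\cdot\vol_{X,x_0}(\mathrm{wt}_\xi)$, with $n=\dim X$, the same way. The difference lies entirely in the middle factor $\wt{S}(\xi;v)$.

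The paper never builds a limit measure. It fixes $\xi_0$ and uses compactness of the slice $\{\alpha\in\sigma\mid\langle\alpha,\xi_0\rangle=1\}$ of the weight cone $\sigma$ to get constants $c_\pm(\xi)\to 1$ with $c_-(\xi)\langle\alpha,\xi_0\rangle\le\langle\alpha,\xi\rangle\le c_+(\xi)\langle\alpha,\xi_0\rangle$ on $\sigma$. Every summand $\lambda\dim\mathrm{gr}^\lambda_vR_\alpha$ is nonnegative, so $\wt{S}_m$ is monotone in its summation domain, giving $\wt{S}_m(c_+(\xi)\xi_0;v)\le\wt{S}_m(\xi;v)\le\wt{S}_m(c_-(\xi)\xi_0;v)$. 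Homogeneity \eqref{eq:S-homogeneity} then squeezes $\wt{S}(\xi;v)$ between $c_\pm(\xi)^{-(n+1)}\wt{S}(\xi_0;v)$. This needs only the existence of each individual limit $\wt{S}(\xi;v)$, which is already in \cite{xu-zhuang:unique}.

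Your route needs a genuinely stronger input: a joint Duistermaat--Heckman limit $\mu_v$ on $M_{X,\RR}\times\RR$ with the exact scaling $\mu_v(c\,W)=c^{n}\mu_v(W)$. That is an Okounkov-body/concave-transform construction for the linearly bounded $\TT_X$-invariant filtration, and you assert it rather than prove it. Granting it, your homogeneity argument that the level sets $\{\langle\alpha,\xi\rangle=c\}$ are $\mu_v$-null is sound, and so is the dominated convergence step. In exchange you get an integral formula for $\wt{S}(\xi;v)$ that could give finer regularity in $\xi$. Your mechanism (fixed data, moving domain, homogeneity) is essentially the paper's. It can be run directly on the finite sums $\wt{S}_m$, which removes the need for $\mu_v$ altogether.

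One small correction: $\mu_v$ is in general neither absolutely continuous nor free of mass on hyperplanes. For $v=\mathrm{wt}_{\xi'}$ it is supported on $\{\lambda=\langle\alpha,\xi'\rangle\}$. This does not hurt you, since you only use nullity of the level sets of $\langle\cdot,\xi\rangle$, and your scaling argument does establish that.
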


\begin{proof}
This follows from an argument similar to the proof of \cite[Lemma~2.19]{xu-zhuang:kaledin}. Set $n\coloneqq \dim X$.

Let $\sigma\subseteq M_{X,\RR}$ be the weight cone, namely the cone generated by $\Gamma_X$. Since the $\TT_X$-action is good, $\sigma$ is a strongly convex closed polyhedral cone, and a Reeb vector is strictly positive on $\sigma\setminus\{0\}$. Therefore, if we fix $\xi_0\in\mathfrak t^+_{X,\RR}$, the slice
\[P_{\xi_0}\coloneqq 
\{\alpha\in\sigma\mid \langle\alpha,\xi_0\rangle=1\}\]
is closed and bounded by \cite[Theorem~8.4]{convex}, hence compact.  For $\xi$ sufficiently close to $\xi_0$, put
\[c_-(\xi)\coloneqq 
 \min_{\alpha\in P_{\xi_0}}\langle\alpha,\xi\rangle,
 \qquad
 c_+(\xi)
 \coloneqq 
 \max_{\alpha\in P_{\xi_0}}\langle\alpha,\xi\rangle.\]
Then $c_-(\xi),c_+(\xi)>0$, and
\[
\lim_{\xi\to \xi_0} c_{\pm}(\xi)=1.\]
Moreover, by homogeneity in $\alpha$, we get
\begin{equation}\label{eq:weight-comparison}
 c_-(\xi)\langle\alpha,\xi_0\rangle
 \leq
 \langle\alpha,\xi\rangle
 \leq
 c_+(\xi)\langle\alpha,\xi_0\rangle
\end{equation}
for any $\alpha\in\sigma$.

The right-hand inequality in \eqref{eq:weight-comparison} implies
\[\{\alpha\in \Gamma_X\mid
   \langle\alpha,c_+(\xi)\xi_0\rangle<m\}
 \subseteq
 \{\alpha\in \Gamma_X\mid
   \langle\alpha,\xi\rangle<m\},\]
whereas the left-hand inequality implies
\[\{\alpha\in \Gamma_X\mid
   \langle\alpha,\xi\rangle<m\}
 \subseteq
 \{\alpha\in \Gamma_X\mid
   \langle\alpha,c_-(\xi)\xi_0\rangle<m\}.\]
So
\[\wt{S}_m(c_+(\xi)\xi_0;v)
 \leq
 \wt{S}_m(\xi;v)
 \leq
 \wt{S}_m(c_-(\xi)\xi_0;v).\]
After dividing by $m^{n+1}/(n+1)!$, passing to the limit, and using
\eqref{eq:S-homogeneity}, we obtain
\begin{equation*}
 c_+(\xi)^{-(n+1)}\wt{S}(\xi_0;v) \leq \wt{S}(\xi;v) \leq c_-(\xi)^{-(n+1)}\wt{S}(\xi_0;v).
\end{equation*}
Thus, $\wt{S}(\xi;v)$ is continuous at $\xi_0$.

Finally, $\xi\mapsto A_{X,\Delta_X}(\xi)$ is continuous on the Reeb cone by \cite[Theorem~2.15(3)]{li-xu:higher-rank}, while
\[\wt{S}(\xi;\mathrm{wt}_{\xi})=n\cdot \vol_{X,x_0}(\mathrm{wt}_\xi)\]
by \cite[(3.4) and pp.~160]{xu-zhuang:unique}, which is continuous
on the Reeb cone as well by \cite[Proposition~3.10]{li-xu:higher-rank} (see also \cite[Theorem~3]{cs:irregular-sasakian}). Therefore, it follows from the definition that $S(\xi;v)$ is continuous on the Reeb cone.
\end{proof}

\begin{lemma}\label{lem:crepant-kollar}
Let $(X,\Delta_X; \TT_X)$ and $(Y, \Delta_Y; \TT_Y)$ be log Fano cones and $q\colon \TT_X\to \TT_Y$ be an isogeny. Let $f\colon (X,\Delta_X)\to (Y, \Delta_Y)$ be a finite surjective $q$-equivariant morphism satisfying $K_X+\Delta_X=f^*(K_Y+\Delta_Y)$. If $\xi_X\in \mathfrak{t}^+_{X,\RR}$ and $\xi_Y\in \mathfrak{t}^+_{Y,\RR}$ are quasi-regular Reeb vectors such that $q_*(\xi_X)=\xi_Y$, then we have a finite surjective morphism
\[h\colon E(\xi_X)\longrightarrow E(\xi_Y)\]
such that $K_{E(\xi_X)}+\Delta_{E(\xi_X)}=h^*(K_{E(\xi_Y)}+\Delta_{E(\xi_Y)})$.
\end{lemma}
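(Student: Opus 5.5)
The plan is to construct $h$ by descending $f$ through the two quotient maps $p_{\xi_X}\colon X^\circ\to E(\xi_X)$ and $p_{\xi_Y}\colon Y^\circ\to E(\xi_Y)$, and then to compare the boundaries using \eqref{eq:def-deltaE}.

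First, I relate the one-parameter subgroups. Since $q_*(\xi_X)=\xi_Y$ and both are quasi-regular, after rescaling (which does not change $E(\xi)$) we may take $\xi_X\in N_X$ primitive. Then $q_*\xi_X=\xi_Y'$ is a positive integer multiple $k\xi_Y$ of the primitive generator in $N_Y$ on the ray of $\xi_Y$, so $q\circ\chi_{\xi_X}=\chi_{\xi_Y}\circ(\lambda\mapsto\lambda^k)$. By Proposition~\ref{prop:lift-cone}(b), or Lemma~\ref{lem:weight-multiple-claim}(d), we have $f^{-1}(y_0)=\{x_0\}$ set-theoretically, so $f$ restricts to a finite surjective morphism $X^\circ\to Y^\circ$. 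This restriction is equivariant for $\Gm$ acting on $X^\circ$ via $\chi_{\xi_X}$ and on $Y^\circ$ via $\chi_{\xi_Y}$ composed with the $k$-th power map. Orbits therefore map to orbits, so the composition $p_{\xi_Y}\circ f|_{X^\circ}$ is $\Gm$-invariant. Because $p_{\xi_X}$ is a geometric (categorical) quotient by a reductive group acting with finite stabilizers, this composition factors through a morphism $h\colon E(\xi_X)\to E(\xi_Y)$ with $h\circ p_{\xi_X}=p_{\xi_Y}\circ f$. Concretely, $E(\xi)=\mathrm{Proj}$ of $\cO(X)$ graded by $\xi$-weight, and $f^*$ maps $\xi_Y$-weight $m$ to $\xi_X$-weight $km$. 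Hence $h$ is the morphism induced by the graded inclusion $\cO(Y)\hookrightarrow\cO(X)$ with degrees rescaled by $k$. By Lemma~\ref{lem:weight-multiple-claim}(c), $\cO(X)$ is finite over $\cO(Y)$ with every weight having a multiple coming from $Y$; this makes the Proj map everywhere defined, finite, and surjective.

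Second, I check crepancy. The fibers of $p_\xi$ are $\Gm$-orbits, so $p_\xi$ is a Seifert $\Gm$-bundle in the sense of Koll\'ar, and by \eqref{eq:def-deltaE} the relation $p_\xi^*(K_E+\Delta_E)=(K_X+\Delta)|_{X^\circ}$ holds as $\QQ$-Cartier divisors. Pulling back along the commutative square gives
\[p_{\xi_X}^*h^*(K_{E(\xi_Y)}+\Delta_{E(\xi_Y)})=f^*p_{\xi_Y}^*(K_{E(\xi_Y)}+\Delta_{E(\xi_Y)})=f^*(K_Y+\Delta_Y)|_{X^\circ}=(K_X+\Delta_X)|_{X^\circ}=p_{\xi_X}^*(K_{E(\xi_X)}+\Delta_{E(\xi_X)}).\]
Since $p_{\xi_X}^*$ is injective on $\QQ$-divisor classes up to linear equivalence, and in fact on divisors, because prime divisors on $E$ pull back to invariant prime divisors with well-defined multiplicities, we conclude that $K_{E(\xi_X)}+\Delta_{E(\xi_X)}=h^*(K_{E(\xi_Y)}+\Delta_{E(\xi_Y)})$.

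The main obstacle is making this pullback comparison rigorous divisor by divisor, since $p_\xi$ can have multiple fibers over divisors of $E$. To handle this, I would compare coefficients at codimension-one points. For a prime divisor $F$ on $E(\xi_X)$ lying over $G=h(F)$, the Seifert formulas express $\coeff_F(\Delta_{E(\xi_X)})$ through $\coeff$ of $\Delta_X$ along $p_{\xi_X}^{-1}(F)$ and the multiplicity $m_F$ of the fiber. Combining this with the Hurwitz-type formula \eqref{eq:coefficient-boundary-under-finite-map} for $f$ and the multiplicativity $e\cdot m_F=m_G\cdot e'$ of ramification indices in the commutative square should recover exactly the coefficient formula \eqref{eq:coefficient-boundary-under-finite-map} for $h$. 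If that bookkeeping becomes messy, the alternative is to work on the locus where everything is smooth and the actions are free in codimension one, where all the maps are honest $\Gm$-bundles and \'etale, and then extend by normality.
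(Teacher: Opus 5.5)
Your argument is correct. The construction of $h$ (from $q\circ\chi_{\xi_X}=\chi_{\xi_Y}\circ[k]$ and the universal property of the quotient) and the crepancy check (pulling \eqref{eq:def-deltaE} back through the commutative square) are exactly the paper's.

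The genuine difference is finiteness. The paper only extracts a surjective $h$ from the quotient and proves crepancy first. It then concludes that $h$ is finite because $h$ is proper and pulls back the ample $-(K_{E(\xi_Y)}+\Delta_{E(\xi_Y)})$ to the ample $-(K_{E(\xi_X)}+\Delta_{E(\xi_X)})$, so it contracts no curve. You instead identify $h$ with the map $\mathrm{Proj}\,\cO(X)\to\mathrm{Proj}\,\cO(Y)$ induced by the finite graded extension, with degrees scaled by $k$. This gives finiteness and surjectivity directly and independently of crepancy, without using that the $(E,\Delta_E)$ are log Fano; the paper's route is shorter given that input.

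For everywhere-definedness, the cleanest reason is the following. $\cO(X)/\cO(Y)_+\cO(X)$ is a finite-dimensional graded $\CC$-algebra, so $\cO(X)_+^N\subset \cO(Y)_+\cO(X)$ for $N\gg 0$. Lemma~\ref{lem:weight-multiple-claim}(c) as stated only says that $\cO(Y)$ has some nonzero element in weight $d\alpha$, which alone is not enough (its proof is).

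One correction in the crepancy step: $p_{\xi_X}^*$ is \emph{not} injective on divisor classes. For $\bA^n\setminus\{0\}\to\mathbb{P}^{n-1}$, the hyperplane class pulls back to zero. So \eqref{eq:def-deltaE} has to be read as an equality of $\QQ$-divisors with compatible canonical divisors. What you actually use is injectivity on divisors, $p^*D=m_D\,(p^{-1}D)_{\mathrm{red}}$, and that is true.

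Your fallback bookkeeping does close up. Write $\wt F=p_{\xi_X}^{-1}(F)$, $\wt G=p_{\xi_Y}^{-1}(G)$, $e=e_{F/G}$, $e'=e_{\wt F/\wt G}$. Combine three facts:
\begin{itemize}
\item the Seifert relation $1-\coeff_F(\Delta_{E(\xi_X)})=\bigl(1-\coeff_{\wt F}(\Delta_X)\bigr)/m_F$, and its analogue on $Y$;
\item the Hurwitz relation for $f$;
\item $e\,m_F=e'\,m_G$.
\end{itemize}
Together they give
\[1-\coeff_F(\Delta_{E(\xi_X)})=\frac{e'\bigl(1-\coeff_{\wt G}(\Delta_Y)\bigr)}{m_F}=\frac{e'm_G}{m_F}\bigl(1-\coeff_G(\Delta_{E(\xi_Y)})\bigr)=e\bigl(1-\coeff_G(\Delta_{E(\xi_Y)})\bigr),\]
which is the analogue of \eqref{eq:coefficient-boundary-under-finite-map} for $h$.
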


\begin{proof}
We follow the notation in Section~\ref{subsec:cone}. By assumption, we see that the morphism $f^{\circ}\coloneqq f|_{X^{\circ}}\colon X^{\circ}\to Y^{\circ}$ is finite surjective satisfying 
\begin{equation}\label{eq:fcirc}
(K_{X}+\Delta_{X})|_{X^{\circ}}=(f^{\circ})^*((K_{Y}+\Delta_{Y})|_{Y^{\circ}}).
\end{equation}
Let $\chi_{\xi_X}\colon \Gm\to \TT_X$ and  $\chi_{\xi_Y}\colon \Gm\to \TT_Y$ be the one-parameter subgroups generated by $\xi_X$ and $\xi_Y$, respectively. From our assumption, we see that $q \circ \chi_{\xi_X}=\chi_{\xi_Y}\circ [k]$ for some $k\in \ZZ_{>0}$, where $[k]\colon \Gm\to \Gm$ is defined by $t\mapsto t^k$. In particular, by the universal property of quotients, we have a morphism $h$ and a commutative diagram 
\[\begin{tikzcd}
	{X^{\circ}} & {Y^{\circ}} \\
	{E(\xi_X)} & {E(\xi_Y)}
	\arrow["{f^{\circ}}", from=1-1, to=1-2]
	\arrow["{p_{\xi_X}}"', from=1-1, to=2-1]
	\arrow["{p_{\xi_Y}}", from=1-2, to=2-2]
	\arrow["h", from=2-1, to=2-2]
\end{tikzcd}\]
so that $h$ is surjective. By \eqref{eq:def-deltaE} and \eqref{eq:fcirc}, we see that $$K_{E(\xi_X)}+\Delta_{E(\xi_X)}=h^*(K_{E(\xi_Y)}+\Delta_{E(\xi_Y)}).$$ Since $-(K_{E(\xi_X)}+\Delta_{E(\xi_X)})$ and $-(K_{E(\xi_Y)}+\Delta_{E(\xi_Y)})$ are both ample, it follows that $h$ is finite.
\end{proof}

\begin{proof}[{Proof of Theorem~\ref{thm:Kss}}]
Let $x_0\in X$ and $y_0\in Y$ be vertices. First, assume that $(Y,\Delta_Y; \TT_Y, \xi_Y)$ is K-semistable. If $(X,\Delta_X; \TT_X, \xi_X)$ is not K-semistable, then by Theorem~\ref{thm:xz-216}, there exists a quasi-monomial valuation $v\in \Val^{\TT_X}_{X,x_0}$ such that 
\[A_{X, \Delta_X}(v)<S(\xi_X; v).\]
Thus, we can find $\frac{1}{2}>\epsilon>0$ sufficiently small such that
\[A_{X, \Delta_X}(v)\leq (1-2\epsilon)S(\xi_X; v).\]
Therefore, Lemma~\ref{lem:continue-S} gives an open neighborhood $U_{\epsilon}\subset \mathfrak{t}^+_{X,\RR}$ of $\xi_X$ so that
\[A_{X, \Delta_X}(v)< (1-\epsilon)S(\xi; v)\]
for any $\xi\in U_{\epsilon}$. By Lemma~\ref{lem:xz-218}, for any quasi-regular $\xi\in U_{\epsilon}$, we have
\[\delta(E(\xi), \Delta_{E(\xi)})<1-\epsilon.\]
Combining this with Lemma~\ref{lem:crepant-kollar} and \cite[Theorem 1.2(3)]{liu-zhu:equivariant}, we obtain
\[\delta(E(\xi'), \Delta_{E(\xi')})<1-\epsilon\]
for any quasi-regular $\xi'\in q_*(U_{\epsilon})$. But this contradicts Lemma~\ref{lem:xz-219} as $\xi_Y\in q_*(U_{\epsilon})$ and quasi-regular Reeb vectors are dense.

The remaining implication is completely the same, after switching the roles of $X$ and $Y$ in the above argument.
\end{proof}

\section{Finite degree formula}

In this section, we prove Theorem~\ref{thm:main} and Corollary~\ref{cor:global}. Using the above results, the proof of Theorem~\ref{thm:main} will be reduced to the following lemma.

\begin{lemma}\label{lem:vol-cone}
Let $(X,\Delta_X; \TT_X)$ and $(Y, \Delta_Y; \TT_Y)$ be log Fano cones and $q\colon \TT_X\to \TT_Y$ be an isogeny. Let $f\colon (X,\Delta_X)\to (Y, \Delta_Y)$ be a finite surjective $q$-equivariant morphism satisfying $K_X+\Delta_X=f^*(K_Y+\Delta_Y)$. If $\xi_X\in \mathfrak{t}^+_{X,\RR}$ and $\xi_Y\in \mathfrak{t}^+_{Y,\RR}$ are Reeb vectors such that $q_*(\xi_X)=\xi_Y$, then 
\[\hvol_{(X, \Delta_X)}(\xi_X)=\deg(f)\cdot \hvol_{(Y, \Delta_Y)}(\xi_Y).\]
\end{lemma}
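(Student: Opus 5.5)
We need to show $A_{X,\Delta_X}(\xi_X)=A_{Y,\Delta_Y}(\xi_Y)$ and $\vol_{X,x_0}(\xi_X)=\deg(f)\cdot\vol_{Y,y_0}(\xi_Y)$. We then multiply the $n$-th power of the first identity by the second, where $n=\dim X=\dim Y$.

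\emph{Step 1: the valuations are compatible.} We show that $\mathrm{wt}_{\xi_X}|_{\CC(Y)}=\mathrm{wt}_{\xi_Y}$. For a homogeneous $a\in\cO(Y)_\alpha$, the pullback $f^*a$ lies in $\cO(X)_{q^*\alpha}$ because $f$ is $q$-equivariant, and by \eqref{eq:pairing-compatibility-q} we have $\langle q^*\alpha,\xi_X\rangle=\langle\alpha,\xi_Y\rangle$. Since distinct weights $\alpha$ map to distinct $q^*\alpha$ (Lemma~\ref{lem:weight-multiple-claim}(a)), the minimum over the weight components of a general element is preserved. So the two valuations agree on $\cO(Y)$, hence on $\CC(Y)$, and the ramification index of $\mathrm{wt}_{\xi_X}$ over $\mathrm{wt}_{\xi_Y}$ is $1$.

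\emph{Step 2: log discrepancies.} We reduce to the quasi-regular case by density and continuity. The maps $\xi\mapsto A(\xi)$ and $\xi\mapsto\vol(\xi)$ are continuous on the Reeb cones (the results cited in the proof of Lemma~\ref{lem:continue-S}), and $q_*$ is a linear isomorphism that maps Reeb cones onto Reeb cones. So it suffices to treat quasi-regular $\xi_X$, with $\xi_Y=q_*\xi_X$ quasi-regular. Then $\mathrm{wt}_{\xi_X}=c_X\ord_{E(\xi_X)}$ and $\mathrm{wt}_{\xi_Y}=c_Y\ord_{E(\xi_Y)}$. The divisor $E(\xi_X)$ lies over $E(\xi_Y)$ with ramification index $r$, and Step~1 gives $c_X r=c_Y$. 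The standard pullback formula for log discrepancies under the crepant finite map $f$ gives $A_{X,\Delta_X}(E(\xi_X))=r\,A_{Y,\Delta_Y}(E(\xi_Y))$. Hence $A_{X,\Delta_X}(\xi_X)=c_X r A_{Y,\Delta_Y}(E(\xi_Y))=A_{Y,\Delta_Y}(\xi_Y)$. This identity also follows directly from the general fact $A_{X,\Delta_X}(w)=A_{Y,\Delta_Y}(w|_{\CC(Y)})$ for crepant finite maps, valid for quasi-monomial $w$, and that route avoids the reduction to the quasi-regular case.

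\emph{Step 3: volumes.} This is the main obstacle. We plan to compute the volumes via weight counting. We have $\vol(\xi_X)=\lim_m \frac{n!}{m^n}\sum_{\langle\beta,\xi_X\rangle<m}\dim\cO(X)_\beta$, and similarly for $Y$. Because $f$ is finite, $\cO(X)$ is a finite graded module over $\cO(Y)$ of generic rank $\deg(f)$, with gradings related by $q^*$. We pass to the graded modules over $\cO(Y)$ obtained by grouping the $M_X$-weights into cosets of $q^*M_Y$; there are finitely many cosets since $q^*M_Y$ has finite index in $M_X$. Under $\langle\cdot,\xi_X\rangle$, each coset piece is a finitely generated module over $\cO(Y)$ whose grading is shifted by a bounded amount. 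The leading asymptotic of the Hilbert function of a finite graded module equals its generic rank times that of $\cO(Y)$; this is a standard consequence of a graded filtration with torsion quotients supported in smaller dimension. Summing over the cosets, the generic ranks add up to $\deg(f)$. Hence $\vol(\xi_X)=\deg(f)\vol(\xi_Y)$. For quasi-regular $\xi$ this can also be checked by viewing $\mathrm{wt}_\xi$ as a $\QQ$-grading and applying the projection formula for $h\colon E(\xi_X)\to E(\xi_Y)$ from Lemma~\ref{lem:crepant-kollar}. This second route tracks $\deg h$ against $\deg f$ and the stabilizer orders, so the Hilbert-function argument is the one I would carry out. Combining Steps 2 and 3 gives the lemma.
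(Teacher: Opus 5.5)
Your proposal is correct and follows essentially the same route as the paper. You check $\mathrm{wt}_{\xi_X}|_{\CC(Y)}=\mathrm{wt}_{\xi_Y}$ on homogeneous elements, and then obtain $A_{X,\Delta_X}(\xi_X)=A_{Y,\Delta_Y}(\xi_Y)$ by reducing to quasi-regular Reeb vectors (where the paper uses linearity of $A$ on the Reeb cone, you use continuity) and applying the crepant pullback formula for divisorial valuations; you get the volume identity by comparing Hilbert functions via an embedding of shifted copies of $\cO(Y)$ into $\cO(X)$ with torsion cokernel, just as the paper does with a homogeneous $\CC(Y)$-basis of $\CC(X)$ (your grouping into cosets of $q^*M_Y$ is harmless but unnecessary, since one can work with $M_X$-graded $\cO(Y)$-modules directly).
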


\begin{proof}
Let $x_0\in X$ and $y_0\in Y$ be the vertices and write $X=\Spec(B)$ and $Y=\Spec(A)$. We first claim that $A_{X, \Delta_X}(\xi_X)=A_{Y, \Delta_Y}(\xi_Y)$. Since the log discrepancy function is linear on Reeb cones, it suffices to assume that $\xi_X$ and $\xi_Y$ are both quasi-regular, which is equivalent to the divisoriality of $\mathrm{wt}_{\xi_X}$ and $\mathrm{wt}_{\xi_Y}$. Moreover, from the construction, we have
\begin{align*}
\mathrm{wt}_{\xi_X}|_{\CC(Y)}(r)=\mathrm{wt}_{\xi_X}(r)&=\min_{r_{\alpha}\neq 0}\langle q^*\alpha,\xi_X \rangle\\
&= \min_{r_{\alpha}\neq 0}\langle \alpha,q_*(\xi_X) \rangle\\
&= \min_{r_{\alpha}\neq 0}\langle \alpha,\xi_Y \rangle\\
&=\mathrm{wt}_{\xi_Y}(r)
\end{align*}
for any $r\in A$. Hence, $\mathrm{wt}_{\xi_X}|_{\CC(Y)}=\mathrm{wt}_{\xi_Y}$ and the claim follows from \cite[Proposition~2.14]{lazic:note}. It remains to show 
\[\vol_{X, x_0}(\xi_X)=\deg(f)\cdot \vol_{Y, y_0}(\xi_Y).\]

To this end, set $n=\dim X=\dim Y$. For any finite $M_X$-graded $A$-module $M$, define
\[P_M(z)\coloneqq \sum_{\substack{\alpha\in M_X\\\langle \alpha,\xi_X \rangle<z}} \dim_{\CC} M_{\alpha}.\]
Note that any $M_Y$-graded $A$-module can be regarded as an $M_X$-graded $A$-module via the embedding $q^*\colon M_Y\hookrightarrow M_X$. In particular, we have the $M_X$-grading on $A$ given by $A_{q^*\alpha}\coloneqq A_{\alpha}$ and
\[A=\bigoplus_{\beta \in q^*(\Gamma_Y)} A_{\beta}.\]
Then from Definition~\ref{def:volume} and $q_*(\xi_X)=\xi_Y$, we have
\begin{equation}\label{eq:hilb-poly}
P_A(z)=\frac{\vol_{Y, y_0}(\xi_Y)}{n!}z^n+o(z^n),\quad P_B(z)=\frac{\vol_{X, x_0}(\xi_X)}{n!}z^n+o(z^n).
\end{equation}

Now, choose homogeneous elements $b_i\in B_{\chi_i}$ for $1\leq i \leq \deg(f)$ forming a $\CC(Y)$-basis of $\CC(X)$, where $\chi_i\in \Gamma_X$. Then we get an injective graded homomorphism
\[
 \bigoplus_{i=1}^{\deg(f)} A(-\chi_i)\longhookrightarrow B
\]
with a torsion cokernel $Q$. Here, $A(-\chi_i)$ is $M_X$-graded as
\[(A(-\chi_i))_{\alpha}\coloneqq A_{\alpha-\chi_i}\]
for any $\alpha\in M_X$. Therefore, we obtain
\[P_B(z)=P_Q(z)+\sum_{i=1}^{\deg(f)} P_A(z-\langle \chi_i, \xi_X\rangle).\]
Note that by \eqref{eq:hilb-poly}, we still have
\[P_A(z-\langle \chi_i, \xi_X\rangle)=\frac{\vol_{Y, y_0}(\xi_Y)}{n!}z^n+o(z^n),\]
so we obtain
\[\frac{\vol_{X, x_0}(\xi_X)}{n!}z^n+o(z^n)=\frac{\deg(f)\cdot \vol_{Y, y_0}(\xi_Y)}{n!}z^n+P_Q(z)+o(z^n).\]
Since $\dim(\mathrm{Supp} (Q))\le n-1$, we have $P_Q(z)=o(z^n)$, which completes the proof.
\end{proof}

Now, we are ready to prove our main theorem.

\begin{proof}[{Proof of Theorem~\ref{thm:main}}]
By Theorem~\ref{thm:stab-deg}, we have a special degeneration $(\cY, \Delta_{\cY}; \sigma_{\cY})\to \bA^1$ of the klt singularity $y\in (Y, \Delta_Y)$ such that $(Y_0, \Delta_{Y_0})$ carries the structure of a K-semistable log Fano cone $(Y_0, \Delta_{Y_0}; \TT_{Y_0}, \xi_{Y_0})$ with the vertex $y_0=\sigma_{\cY}(0)$. Applying Proposition~\ref{prop:pull-back-weak}, we may assume that there exists a special degeneration $(\cX, \Delta_{\cX}; \sigma_{\cX})\to \bA^1$ with a $\Gm$-equivariant finite surjection $g\colon \cX\to \cY$ and $g_0(x_0)=y_0$, where $x_0=\sigma_{\cX}(0)$. Applying Proposition~\ref{prop:lift-cone}, $(X_0,\Delta_{X_0})$ also admits a log Fano cone structure $(X_0,\Delta_{X_0}; \TT_{X_0})$ with the vertex $x_0$ and an isogeny $q\colon \TT_{X_0}\to \TT_{Y_0}$ such that $g_0$ is $q$-equivariant.

If we set $\xi_{X_0}\coloneqq (q_*)^{-1}(\xi_{Y_0})$, then by Theorem~\ref{thm:Kss}, $(X_0,\Delta_{X_0}; \TT_{X_0}, \xi_{X_0})$ is K-semistable. Therefore, Lemma~\ref{lem:vol-cone} gives
\begin{align*}
\hvol(x_0, X_0, \Delta_{X_0})=&\hvol_{(X_0, \Delta_{X_0})}(\xi_{X_0})\\
=&\deg(g_0)\cdot \hvol_{(Y_0, \Delta_{Y_0})}(\xi_{Y_0})\\
=&\deg(g_0)\cdot \hvol(y_0, Y_0, \Delta_{Y_0}).
\end{align*}
By \eqref{eq:vol} and Proposition~\ref{prop:pull-back-weak}(b), we get
\[\hvol(x_0, X_0, \Delta_{X_0})= \deg(f)\cdot \hvol(y, Y, \Delta_{Y}).\]
Using \cite[Theorem~1]{blum-liu:lower}, we obtain
\[\hvol(x, X, \Delta_{X})\geq \deg(f)\cdot \hvol(y, Y, \Delta_{Y}).\]

On the other hand, for any divisorial valuation $v\in \Val_{Y, y}$, take $w\in \Val_{X,x}$ such that $w|_{\CC(Y)}=v$. Then $A_{Y,\Delta_Y}(v)=A_{X,\Delta_X}(w)$ follows from \cite[Proposition 5.20]{kollar-mori}. Moreover, we have
\begin{equation}\label{eq:extend-a}
\mathfrak{a}_m(v)\cO(X)\subset \mathfrak{a}_m(w)
\end{equation}
for any $m\geq 0$. It follows that
\[\mathrm{length}(\cO(X)/\mathfrak{a}_m(w))\leq \mathrm{length}(\cO(X)/\mathfrak{a}_m(v)\cO(X)).\]
By choosing a basis of $\CC(X)$ over $\CC(Y)$ and clearing the denominators, we get an injection $\cO(Y)^{\oplus \deg(f)}\hookrightarrow \cO(X)$ with a cokernel $C$, which also implies
\[\mathrm{length}(\cO(X)/\mathfrak{a}_m(v)\cO(X))\leq \deg(f)\cdot \mathrm{length}(\cO(Y)/\mathfrak{a}_m(v))+\mathrm{length}(C/\mathfrak{a}_m(v)C).\]
Combining this with \eqref{eq:extend-a}, we obtain
\[\vol_{X,x}(w)\leq \deg(f)\cdot \vol_{Y,y}(v)+\limsup_{m\to +\infty} \frac{(\dim X)!}{m^{\dim X}}\mathrm{length}(C/\mathfrak{a}_m(v)C).\]
Since $\dim \mathrm{Supp}(C)\leq \dim X-1$, we conclude that $\vol_{X,x}(w)\leq \deg(f)\cdot \vol_{Y,y}(v)$, hence
\[\hvol(x, X, \Delta_{X})\leq\hvol_{(X,\Delta_X)}(w)\leq \deg(f)\cdot \hvol_{(Y,\Delta_Y)}(v)\]
follows from $A_{Y,\Delta_Y}(v)=A_{X,\Delta_X}(w)$. Taking the infimum over all divisorial $v$ and using \eqref{eq:hvol-divisor}, we get
\[\hvol(x, X, \Delta_{X})\leq \deg(f)\cdot \hvol(y, Y, \Delta_{Y}),\]
which completes the proof.
\end{proof}

\begin{proof}[{Proof of Corollary~\ref{cor:global}}]
Using \citestacks{02LN}, we may assume that $f^{-1}(\{y\})=\{x\}$ as sets. In this case, the induced morphism $$f_x\colon \big(x\in (\Spec(\cO_{X,x}), \Delta_X|_{\Spec(\cO_{X,x})})\big)\longrightarrow \big(y\in (\Spec(\cO_{Y,y}),\Delta_Y|_{\Spec(\cO_{Y,y})})\big)$$
is finite by \citestacks{05B8}. Now, the result follows from applying Theorem~\ref{thm:main} to $f_x$.
\end{proof}


\bibliography{formula}

\bibliographystyle{alpha}

\end{document}